\title{Orientable and negative orientable sequences}
\author{Chris J. Mitchell and Peter R. Wild
\\Information Security Group, Royal Holloway, University of London\\
\href{mailto:me@chrismitchell.net}{me@chrismitchell.net};
~~~~\href{mailto:peterrwild@gmail.com}{peterrwild@gmail.com}}
\date{14th February 2025}
\newtheorem{lemma}{Lemma}[section]
\newtheorem{theorem}[lemma]{Theorem}
\newtheorem{corollary}[lemma]{Corollary}
\newtheorem{definition}[lemma]{Definition}
\newtheorem{remark}[lemma]{Remark}
\newtheorem{construction}[lemma]{Construction}
\newtheorem{result}[lemma]{Result}
\newtheorem{note}[lemma]{Note}
\newenvironment{proof}[1][Proof]{\begin{trivlist}
\item[\hskip \labelsep {\bfseries #1}]}{\end{trivlist}}
\newcommand{\qed}{\nobreak \ifvmode \relax \else
      \ifdim\lastskip<1.5em \hskip-\lastskip
      \hskip1.5em plus0em minus0.5em \fi \nobreak
      \vrule height0.75em width0.5em depth0.25em\fi}
\begin{document}

\maketitle

\begin{abstract}
Analogously to de Bruijn sequences, orientable sequences have application in automatic
position-location applications and, until recently, studies of these sequences focused on the
binary case. In recent work by Alhakim et al., a range of methods of construction were described
for orientable sequences over arbitrary finite alphabets; some of these methods involve using
negative orientable sequences as a building block. In this paper we describe three techniques for
generating such negative orientable sequences, as well as upper bounds on their period.  We then go
on to show how these negative orientable sequences can be used to generate orientable sequences for
every non-binary alphabet size and for every tuple length. In doing so we use two closely related approaches described by
Alhakim et al. The periods of both negative orientable and orientable sequences that we construct are  of the same
order of magnitude as the upper bounds.

\end{abstract}

\section{Introduction}  \label{section:introduction}

Orientable sequences are periodic sequences with elements drawn from a finite alphabet with the
property that any subsequence of $n$ consecutive elements (an $n$-tuple) occurs at most once
\emph{in either direction}; they were introduced in 1992~\cite{Burns92,Burns93,Dai93}. For example,
the sequence over $\{0,1,2\}$ with period $[012]$ is orientable as the $2$-tuples it contains are
$(0,1),(1,2),(2,0)$ in the forward direction and $(0,2),(2,1),(1,0)$ in the reverse direction. They
are of interest due to their application in certain position-resolution scenarios. For the binary
case, a construction and an upper bound on the period were established by Dai et al.~\cite{Dai93},
and further constructions were established by Mitchell and Wild~\cite{Mitchell22} and Gabri\'{c}
and Sawada~\cite{Gabric24,Gabric24b}. A bound on the period and methods of construction for $q$-ary
alphabet sequences (for arbitrary $q$) were given by Alhakim et al.~\cite{Alhakim24a}.

In this paper we use two of the methods proposed in~\cite{Alhakim24a} to construct $q$-ary
orientable sequences for arbitrary $n$ and $q$. This provides orientable sequences with period
of the same order of magnitude as the upper bound.  Both these methods involve the use of
$q$-ary negative orientable sequences as a building block.  Negative orientable sequences,
first introduced in~\cite{Alhakim24a}, are analogous to orientable sequences except that any
$n$-tuple occurs at most once in either a period of the sequence or a period of the negative of
the reverse sequence.  Interpreting the alphabet of the example above as residues modulo $3$,
the example above is not negative orientable as the negatives of the reverse $2$-tuples are
$(0,1),(1,2),(2,0)$, equal to the forward $2$-tuples. The sequence, over the same alphabet,
with period $[011]$ is negative orientable (but not orientable) as the forward $2$-tuples are
$(0,1),(1,1),(1,0)$ and the negatives of the reverse $2$-tuples are $(2,0),(2,2),(0,2)$.

We first demonstrate methods for generating negative orientable sequences for every $q>2$ and $n$.  All these sequences can in
turn be used to generate orientable sequences via the inverse Lempel Homomorphism,
following~\cite{Alhakim24a}. This parallels recent work by Gabri\'{c} and Sawada~\cite{Gabric24}
who also constructed $q$-ary orientable sequences for arbitrary $n$ and $q$, albeit using a different approach.

The main motivation in studying negative orientable sequences has been to help construct new
examples of orientable sequences.  However, they may have applications of their own, and
potentially merit further study.

The remainder of this paper is structured as follows.  In the rest of
Section~\ref{section:introduction}, basic concepts are introduced.
Section~\ref{section:NOS_construction_for_n 2} describes how maximal length orientable
sequences can be constructed for every alphabet size $q>2$ when the window size is 2.
Section~\ref{section:NOS} establishes an upper bound on the period of negative orientable
sequences and gives three methods of construction for such sequences. In
Sections~\ref{section:OS_near_optimal} and \ref{section:goodOS}, we show how the negative
orientable sequences constructed in Section~\ref{section:NOS} can be used to construct new
orientable sequences.  Finally, Section~\ref{section:Conclusions} concludes the paper, and
suggests directions for future work.

\subsection{Basic terminology}

We first establish some simple notation, following~\cite{Alhakim24a}. For mathematical convenience
we consider the elements of a sequence to be elements of $\mathbb{Z}_q$ for an arbitrary integer
$q>1$.

For a sequence $S = (s_i)$ we write $\mathbf{s}_n(i) = (s_i,s_{i+1},\ldots,s_{i+n-1})$.  Since we
are interested in tuples occurring either forwards or backwards in a sequence we also introduce the
notion of a reversed tuple, so that if $\mathbf{u} = (u_0,u_1,\ldots,u_{n-1})$ is a $q$-ary
$n$-tuple (a string of symbols of length $n$) then $\mathbf{u}^R = (u_{n-1},u_{n-2}, \ldots,u_0)$
is its \emph{reverse}.  The \emph{negative} of a $q$-ary $n$-tuple
$\mathbf{u}=(u_0,u_1,\ldots,u_{n-1})$ is the $n$-tuple $-\mathbf{u}=(-u_0,-u_1,\ldots,-u_{n-1})$.

We can then give the following.

\begin{definition}[\cite{Alhakim24a}]
A $q$-ary \emph{$n$-window sequence $S = (s_i)$} is a periodic sequence of elements from
$\mathbb{Z}_q$ ($q>1$, $n>1$) with the property that no $n$-tuple appears more than once in a
period of the sequence, i.e.\ with the property that if $\mathbf{s}_n(i) = \mathbf{s}_n(j)$ for
some $i,j$, then $i \equiv j \pmod m$ where $m$ is the period of the sequence.
\end{definition}

\begin{definition}[\cite{Alhakim24a}]
A $q$-ary $n$-window sequence $S = (s_i)$ is said to be an \emph{orientable sequence of order $n$}
(an $\mathcal{OS}_q(n)$) if $s_n(i) \neq s_n(j)^R$, for any $i,j$.
\end{definition}

A range of examples of orientable sequences can be found in Gabri\'{c} and Sawada~\cite{Gabric24}.
We also need two related definitions.

\begin{definition}[\cite{Alhakim24a}]  \label{definition:NOS}
A $q$-ary $n$-window sequence $S=(s_i)$ is said to be a \emph{negative orientable sequence of order
$n$} (a $\mathcal{NOS}_q(n)$) if $\mathbf{s}_n(i)\not=-{\mathbf{s}_n(j)}^R$, for any $i,j$.
\end{definition}

Examples of negative orientable sequences are given in Section~\ref{section:NOS-examples}.
As discussed in Alhalkim et al.~\cite{Alhakim24a}, it turns out that negative orientable sequences
can be used to construct orientable sequences. Observe that a sequence is orientable if and only if
it is negative orientable for the case $q=2$. Also note that if $S=(s_i)$ is orientable or negative
orientable then so is its negative ($-s_i$).

\subsection{Related work}

This paper builds on the work of Alhakim et al.~\cite{Alhakim24a}, in which recursive methods of
construction for non-binary orientable sequences are described. In particular, methods for
developing sequences suitable for use as `starter sequences' in the recursive constructions are
developed.

The work described here is complementary to parallel recent work by Gabri\'{c} and
Sawada~\cite{Gabric25}.

\section{Constructing maximal orientable sequences}  \label{section:NOS_construction_for_n 2}

We start by showing that, unlike the binary case, orientable sequences for $n=2$ with period
meeting the bound of~\cite{Alhakim24a} can easily be constructed for every $q>2$. Analogous results
have recently been obtained by Gabri\'{c} and Sawada~\cite{Gabric25}.

The bound of Theorem 4.11 of~\cite{Alhakim24a} is as follows:
Suppose that $S=(s_i)$ is an $\mathcal{OS}_q(n)$ ($q\geq2$, $n\geq 2$). Then the period of $S$ is
at most
\begin{eqnarray*}
(q^n - q^{\lceil n/2\rceil} - q^{\lceil (n-1)/2\rceil} + q)/2 & \mbox{if $q$ is odd}, \\
(q^n - q^{\lceil n/2\rceil} - q)/2 & \mbox{if $q$ is even}.
\end{eqnarray*}
Further, if $q$ is odd and $n\geq 6$ then the period of $S$ is at most
\begin{eqnarray*}
(q^n - 2q^{n/2} - q^{(n-2)/2} + 2q)/2 & \mbox{if $n$ is even}, \\
(q^n - q^{(n+1)/2} - 2q^{(n-1)/2} + q + q^2)/2 & \mbox{if $n$ is odd}.
\end{eqnarray*}

We note that all these bounds are asymptotically of the same order as $q^n/2$.

\begin{definition}
Let $S$ be an $OS_q(n)$. If $S$ has period meeting the bound of Theorem 4.11 of Alhakim et
al.~\cite{Alhakim24a} then $S$ is said to be \emph{maximal}.
\end{definition}

\begin{lemma}  \label{lemma:n=2}
There exists a maximal $OS_q(2)$ for all $q \ge 3$.
\end{lemma}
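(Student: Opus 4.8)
The plan is to recast the $n=2$ case in graph-theoretic terms and reduce it to finding an Eulerian circuit in a suitable orientation of (a subgraph of) the complete graph $K_q$ on vertex set $\mathbb{Z}_q$. For $n=2$ a window is just an ordered pair $(s_i,s_{i+1})$, so a period-$m$ cyclic sequence is exactly a closed walk of length $m$ in a digraph on $\mathbb{Z}_q$ whose arcs are the windows traversed, which is the de Bruijn graph $B_1(q)$ with each vertex identified with a symbol. The window property forces all arcs used to be distinct; the orientability property forces, first, that there are no loops (a palindromic window $(a,a)$ equals its own reverse, and taking $i=j$ in Definition~\ref{definition:NOS}'s orientable analogue rules it out), and second, that for each unordered pair $\{a,b\}$ at most one of the arcs $(a,b),(b,a)$ appears. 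Hence an $OS_q(2)$ of period $m$ is precisely an orientation of an $m$-edge simple subgraph of $K_q$ that admits an Eulerian circuit.

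First I would pin down the target period. Since at most one orientation of each of the $\binom{q}{2}$ edges may be used, $m\le q(q-1)/2$, which I expect is the bound of Theorem~4.11 for odd $q$. For even $q$ there is a parity obstruction: an Eulerian orientation needs in-degree equal to out-degree at every vertex, so the underlying undirected subgraph must have all degrees even, whereas every vertex of $K_q$ has odd degree $q-1$; at least a perfect matching ($q/2$ edges) must therefore be deleted, giving $m\le q(q-2)/2$. Thus in both parities the maximal period is $q\lfloor (q-1)/2\rfloor$, and the task is to realise it.

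Then I would give a single uniform construction, the circulant orientation: direct the arc $i\to j$ exactly when $j-i \bmod q\in\{1,2,\ldots,\lfloor (q-1)/2\rfloor\}$. I would check that this is a legitimate $OS_q(2)$: there are no loops, and since $1\le k\le\lfloor (q-1)/2\rfloor$ forces $q-k>\lfloor (q-1)/2\rfloor$, the reverse of any chosen arc (a difference-$(q-k)$ arc) is never chosen, so the orientability condition holds. Each vertex has in-degree and out-degree both equal to $\lfloor (q-1)/2\rfloor$, so the digraph is balanced, and it contains the directed Hamiltonian cycle $0\to 1\to\cdots\to q-1\to 0$ formed by the difference-$1$ arcs, hence is strongly connected. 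A balanced, strongly connected digraph has an Eulerian circuit; listing the symbols at the vertices traversed along this circuit yields a cyclic $q$-ary sequence whose period equals the number of arcs, namely $q\lfloor (q-1)/2\rfloor$, meeting the bound and hence maximal.

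I expect the main obstacle to be the even-$q$ case: one must argue that the parity loss is exactly one perfect matching and no more, and confirm that the circulant orientation using differences $1,\ldots,q/2-1$ (which discards precisely the antipodal difference-$q/2$ matching) simultaneously achieves balance, strong connectivity, and the reduced optimum $q(q-2)/2$. The odd case is the clean special instance in which the construction is a regular tournament using every edge of $K_q$. Beyond elementary counting, the only external facts needed are the standard existence criterion for Eulerian circuits in digraphs and the strong connectivity supplied by the difference-$1$ Hamiltonian cycle.
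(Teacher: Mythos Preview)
Your proof is correct and follows essentially the same Eulerian-circuit strategy as the paper. The paper's argument is framed in the undirected setting: it quotes the bound of \cite[Theorem~4.11]{Alhakim24a} for the target period, observes that an $OS_q(2)$ is a closed trail in $K_q$, and then invokes Euler's theorem directly in $K_q$ (for $q$ odd) or in $K_q$ with an arbitrary $1$-factor removed (for $q$ even). Your version differs only in that you work in the directed setting, supply an explicit circulant orientation (arcs $i\to i+k$ for $1\le k\le\lfloor(q-1)/2\rfloor$), verify balance and strong connectivity, and along the way re-derive the upper bound via the parity obstruction rather than citing it. The explicit circulant is a nice touch---it makes the construction uniform across parities and pins down exactly which $1$-factor is discarded when $q$ is even---but the underlying idea is the same reduction to Euler's theorem.
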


\begin{proof}
First observe that the ring sequence corresponding to an $OS_q(2)$ of period $p$ corresponds to the
list of vertices visited in a circuit of length $p$ in the complete graph $K_q$ with its $q$
vertices labelled $0,1,\ldots,q-1$.

If $q$ is odd, then, by \cite[Theorem 4.11]{Alhakim24a}, a maximal $OS_q(2)$ $S$ has period
$q(q-1)/2$, i.e.\ it corresponds to an Eulerian circuit in the complete graph on $q$ vertices,
$K_q$ (since $K_q$ has $q(q-1)/2$ edges). Every vertex of $K_q$ has degree $q-1$, which is even
since $q$ is odd, and hence such a circuit always exists by Euler's Theorem (see, for example,
Corollary 6.1 of Gibbons \cite{Gibbons85}), and the result follows.

If $q$ is even, then, by \cite[Theorem 4.11]{Alhakim24a}, a maximal $OS_q(2)$ $S$ has period
$q(q-2)/2$. Let $K^*_q$ be the complete graph with an arbitrary 1-factor removed; for example, if
the vertices are labelled $0,1,\ldots,q-1$, we could remove the edges ($2i$, $2i+1$) for every $i$,
$0\leq i<q/2$. It is simple to observe that $K^*_q$ has $q(q-2)/2$ edges and every vertex has
degree $q-2$, which is even. The desired result again follows from Euler's Theorem.  \qed
\end{proof}

\begin{remark}
There are simple algorithms for finding Eulerian circuits --- see for example Gibbons~\cite[Figure
6.5]{Gibbons85}. Moreover, a direct construction of an $OS_q(2)$ for the case where $q>2$ is a
prime is presented in~\cite[Construction 5.3]{Alhakim24a}.
\end{remark}

\section{Negative orientable sequences}  \label{section:NOS}

We next establish some fundamental results on negative orientable sequences (see
Definition~\ref{definition:NOS}), given their importance in constructing orientable sequences using
the methods of~\cite{Alhakim24a}.  We further describe three methods of construction for such
sequences.  Here and throughout the remainder of the paper we only consider the case $q>2$ for two
main reasons.  As observed in Section~\ref{section:introduction}, when $q=2$, negative orientable
sequences are the same as orientable sequences.  Further, the methods of construction we present in
this section do not work in the case $q=2$.

\subsection{A bound on the period}

We start by giving a bound on the period of negative orientable sequences.  To establish this bound
we need to introduce the following terminology to distinguish a special class of $n$-tuples. A
$q$-ary $n$-tuple $\mathbf{u}=(u_0,u_1,\dots,u_{n-1})$ is said to be \emph{negasymmetric} if
$u_i=-u_{n-1-i}$ for every $i$ ($0\leq i\leq n-1$), i.e.\ if $\mathbf{u}=-\mathbf{u}^R$\@.  Clearly
an $\mathcal{NOS}_q(n)$ cannot contain any negasymmetric $n$-tuples.  This motivates the following.

\begin{lemma}  \label{lemma:negasymmetric}
For $n\geq 2$, the number of $q$-ary negasymmetric $n$-tuples is
\begin{align*}
q^{n/2} & \mbox{~~if $n$ is even} \\
q^{(n-1)/2} & \mbox{~~if $n$ is odd and $q$ is odd} \\
2q^{(n-1)/2} & \mbox{~~if $n$ is odd and $q$ is even.}
\end{align*}
\end{lemma}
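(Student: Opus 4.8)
The plan is to count $q$-ary $n$-tuples $\mathbf{u}=(u_0,\ldots,u_{n-1})$ satisfying the negasymmetry condition $u_i=-u_{n-1-i}$ for all $i$ by observing that this condition pairs up the index $i$ with the index $n-1-i$, and then counting the number of free choices. The key observation is that the negasymmetry constraints couple the entries at positions $i$ and $n-1-i$: once $u_i$ is chosen, $u_{n-1-i}$ is forced to equal $-u_i$. So I would split into cases according to the parity of $n$, since that determines whether there is a central fixed index.

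First I would handle the even case. When $n$ is even, the pairing $i \leftrightarrow n-1-i$ has no fixed point (since $i=n-1-i$ would force $2i=n-1$, impossible for even $n$), so the $n$ indices split into exactly $n/2$ disjoint pairs. For each pair we may freely choose $u_i \in \mathbb{Z}_q$ and then $u_{n-1-i}=-u_i$ is determined, giving $q$ choices per pair and hence $q^{n/2}$ negasymmetric tuples in total. This matches the first line.

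Next I would treat the odd case, where $i=n-1-i$ does have the solution $i=(n-1)/2$, the central index. So here there are $(n-1)/2$ free pairs, contributing $q^{(n-1)/2}$, together with the central entry $u_{(n-1)/2}$, which must satisfy the self-paired constraint $u_{(n-1)/2}=-u_{(n-1)/2}$, i.e.\ $2u_{(n-1)/2}\equiv 0 \pmod q$. The remaining work is to count the solutions of $2x\equiv 0$ in $\mathbb{Z}_q$: when $q$ is odd, $2$ is invertible, so $x=0$ is the unique solution, giving a factor of $1$ and total $q^{(n-1)/2}$; when $q$ is even, there are exactly two solutions, $x=0$ and $x=q/2$, giving a factor of $2$ and total $2q^{(n-1)/2}$. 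This recovers the second and third lines.

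The main (and really the only) subtlety is the count of solutions to $2x\equiv 0 \pmod q$, since everything else is a clean product over independent index-pairs; I would make sure to justify carefully that $x=q/2$ is a genuine distinct solution precisely when $q$ is even. I would also want a brief remark that the product structure is valid because the pairs are genuinely disjoint and their constraints are independent, which is immediate from the fact that $i\mapsto n-1-i$ is an involution on $\{0,\ldots,n-1\}$ whose orbits are exactly these pairs (plus the central fixed point in the odd case).
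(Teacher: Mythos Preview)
Your proposal is correct and follows essentially the same argument as the paper: both count the free choices by pairing index $i$ with $n-1-i$, note that the even case is immediate, and in the odd case reduce to solving $u_m=-u_m$ (i.e.\ $2u_m\equiv 0\pmod q$) for the central entry, obtaining one solution when $q$ is odd and two when $q$ is even. The paper's version is merely terser.
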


\begin{proof}
The $n$ even case is immediate. For $n$ odd ($n=2m+1$ say), an $n$-tuple
$(u_0,u_1,\ldots,u_{m-1},u_m,-u_{m-1},\ldots,-u_1,-u_0)$ is negasymmetric if and only if
$u_m=-u_m$. If $q$ is odd this implies $u_m=0$, and if $q$ is even then this implies $u_m$ is
either $0$ or $q/2$.  The result follows. \qed
\end{proof}

This gives the following simple bound.

\begin{corollary}  \label{corollary:simple-NOS-bound}
For $n\geq 2$, the period of an $\mathcal{NOS}_q(n)$ is at most
\begin{align*}
(q^n-q^{n/2})/2 & \mbox{~~if $n$ is even} \\
(q^n-q^{(n-1)/2})/2 & \mbox{~~if $n$ is odd and $q$ is odd} \\
(q^n-2q^{(n-1)/2})/2 & \mbox{~~if $n$ is odd and $q$ is even.}
\end{align*}
\end{corollary}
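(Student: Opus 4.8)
The plan is to count, for each $n$-tuple $\mathbf{u}$, the size of the equivalence class $\{\mathbf{u}, -\mathbf{u}^R\}$ and use this to bound how many tuples a $\mathcal{NOS}_q(n)$ may contain. The key observation driving the corollary is already supplied in the text preceding Lemma~\ref{lemma:negasymmetric}: a negative orientable sequence may contain no negasymmetric tuple (a tuple equal to its own negated-reverse), and more generally it may contain at most one of the two tuples $\mathbf{u}$ and $-\mathbf{u}^R$ whenever these are distinct. So the period, which equals the number of distinct $n$-tuples appearing, is bounded by the number of such pairs together with (zero, since they are forbidden) the negasymmetric tuples.

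First I would partition the $q^n$ total $n$-tuples into orbits under the involution $\mathbf{u}\mapsto -\mathbf{u}^R$. Each orbit has size either $2$ (when $\mathbf{u}\neq -\mathbf{u}^R$) or $1$ (the fixed points, which are exactly the negasymmetric tuples counted in Lemma~\ref{lemma:negasymmetric}). Writing $N$ for the number of negasymmetric tuples from that lemma, the number of size-$2$ orbits is $(q^n-N)/2$. Since an $\mathcal{NOS}_q(n)$ contains neither fixed point (each negasymmetric tuple is forbidden outright) nor both members of any size-$2$ orbit, the number of distinct tuples it can contain — and hence its period — is at most $(q^n-N)/2$.

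Substituting the three values of $N$ from Lemma~\ref{lemma:negasymmetric} then yields exactly the three displayed bounds: $(q^n-q^{n/2})/2$ for $n$ even, $(q^n-q^{(n-1)/2})/2$ for $n$ odd with $q$ odd, and $(q^n-2q^{(n-1)/2})/2$ for $n$ odd with $q$ even. The argument is essentially a counting identity once the involution framework is set up, so I do not anticipate a genuine obstacle; the only point requiring a line of care is confirming that $\mathbf{u}\mapsto -\mathbf{u}^R$ is indeed an involution (so that orbits really have size $1$ or $2$) and that its fixed points coincide precisely with the negasymmetric tuples. Both facts are immediate from the definitions, since $-(-\mathbf{u}^R)^R = \mathbf{u}$ and the fixed-point condition $\mathbf{u}=-\mathbf{u}^R$ is exactly negasymmetry.
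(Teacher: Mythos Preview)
Your proposal is correct and is essentially the paper's own argument, just phrased more explicitly in terms of the involution $\mathbf{u}\mapsto-\mathbf{u}^R$ and its orbits; the paper simply says that negasymmetric tuples are excluded and that at most one of $\mathbf{u}$, $-\mathbf{u}^R$ can occur, then invokes Lemma~\ref{lemma:negasymmetric}.
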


\begin{proof}
There are trivially $q^n$ $q$-ary $n$-tuples, of which only those that are non-negasymmetric can
occur in a $\mathcal{NOS}_q(n)$. If $\mathbf{u}$ is such an $n$-tuple, then at most one of
$\mathbf{u}$ and $-\mathbf{u}^R$ can occur in an $\mathcal{NOS}_q(n)$. The result follows. \qed
\end{proof}

As we next show, it is possible to improve on this simple bound.  We first need the following.

\begin{definition}
An $n$-tuple $\mathbf{u} = (u_0,u_1,\ldots,u_{n-1})$, $u_i \in \mathbb{Z}_q$ ($0\leq i\leq n-1)$),
is said to be $m$-\emph{negasymmetric} for some $m\leq n$ if and only if the $m$-tuple
$(u_0,u_1,\ldots,u_{m-1})$ is negasymmetric.
\end{definition}

Clearly an $n$-tuple is $n$-negasymmetric if and only if is it is negasymmetric.  We also need the
notion of uniformity.

\begin{definition}
An $n$-tuple $\mathbf{u}=(u_0,u_1,\ldots,u_{n-1})$, $u_i\in\mathbb{Z}_q$ ($0\leq i\leq n-1$), is
$c$-\emph{uniform} for some $c\in\mathbb{Z}_q$ if and only if $u_i=c$ for every $i$ ($0\leq i\leq
n-1$).
\end{definition}

We can then state the following elementary results.

\begin{lemma} \label{lemma:negasymmetry}
If $n\geq2$ and $\mathbf{u}=(u_0,u_1,\ldots,u_{n-1})$ is a $q$-ary $n$-tuple that is both
negasymmetric and $(n-1)$-negasymmetric, then $\mathbf{u}$ is $c$-uniform where $c$ is either 0 or
$q/2$ (the latter only applying if $q$ is even).
\end{lemma}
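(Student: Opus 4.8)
The plan is to translate both hypotheses into explicit coordinate equations and then combine them to collapse $\mathbf{u}$ to a constant tuple. Writing out the definitions, negasymmetry of $\mathbf{u}$ says $u_i = -u_{n-1-i}$ for every $i$ with $0 \le i \le n-1$, while $(n-1)$-negasymmetry says $u_i = -u_{n-2-i}$ for every $i$ with $0 \le i \le n-2$. The key observation is that on the overlapping range $0 \le i \le n-2$ the two right-hand sides are identical (both equal $-u_i$), so that $u_{n-1-i} = u_{n-2-i}$ for each such $i$.

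Next I would let $i$ run through $0, 1, \ldots, n-2$ in this last identity. This yields the chain $u_{n-1} = u_{n-2} = \cdots = u_1 = u_0$, so that all $n$ entries of $\mathbf{u}$ coincide; that is, $\mathbf{u}$ is $c$-uniform with $c = u_0$. Substituting this uniformity back into the negasymmetry condition at $i=0$ gives $c = -c$, i.e.\ $2c \equiv 0 \pmod q$. When $q$ is odd this forces $c = 0$, and when $q$ is even it forces $c \in \{0, q/2\}$, which is exactly the claimed conclusion.

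The argument is short, so the only thing requiring care is the index bookkeeping: one must check that the range $0 \le i \le n-2$ on which both relations are simultaneously available is large enough for the telescoping chain to reach every index from $0$ to $n-1$, and that the degenerate case $n=2$ (where the $(n-1)$-negasymmetry condition reduces to negasymmetry of the single symbol $u_0$) is still covered. Verifying that the chain indeed links consecutive indices without gaps is the main---and essentially the only---obstacle.
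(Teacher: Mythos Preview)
Your proof is correct and mirrors the paper's approach: both combine the two symmetry relations on the common index range $0\le i\le n-2$ to show that consecutive entries of $\mathbf{u}$ coincide (the paper obtains $u_i=u_{i+1}$, you obtain the equivalent $u_{n-1-i}=u_{n-2-i}$), and then use $c=-c$ to pin down $c$. One minor wording slip: in your key observation it is the two \emph{left}-hand sides that are identical (both equal $u_i$), forcing the right-hand sides to agree---but your conclusion is unaffected.
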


\begin{proof} Choose any $i$, $0\leq i\leq n-2$. Then, by negasymmetry we know that $u_{i+1}=-u_{n-2-i}$
(observing that $i\leq n-2$).  Also, by $(n-1)$-negasymmetry we know that $u_i=-u_{n-2-i}$, and
hence we have $u_i=u_{i+1}$.  The result follows. \qed
\end{proof}

\begin{definition}
Let $N^*_q(n)$ be the set of all non-negasymmetric $q$-ary $n$-tuples.
\end{definition}

The size of $N^*_q(n)$ follows from Lemma~\ref{lemma:negasymmetric}. As a first step towards
establishing our bound we need to define a special set of $n$-tuples, as follows.

\begin{definition}[\cite{Alhakim24a}]
Suppose $n\geq 2$, and that $\mathbf{v}=(v_0,v_1,\ldots,v_{n-2})$ is a $q$-ary $(n-1)$-tuple.  Then
let $L(\mathbf{v})$ be the following set of $q$-ary $n$-tuples:
\[  L(\mathbf{v}) = \{ \mathbf{u}=(u_0,u_1,\ldots,u_{n-1}):~
u_i=v_i,~~0\leq i\leq n-2 \}. \]
\end{definition}

That is, $L(\mathbf{v})$ is simply the set of $n$-tuples whose first $n-1$ entries equal
$\mathbf{v}$.  Clearly, the sets $L(\mathbf{v})$ for all $(n-1)$-tuples $\mathbf{v}$ are disjoint.
We are interested in how the sets $L(\mathbf{v})$ intersect with the set of $n$-tuples occurring in
either $S$ or $-S^R$ when $S$ is a $\mathcal{NOS}_q(n)$ and $\mathbf{v}$ is a negasymmetric
$(n-1)$-tuple.

\begin{definition}
Suppose $n\geq 2$, $r\geq 1$, $S=(s_i)$ is a $\mathcal{NOS}_q(n)$, and
$\mathbf{v}=(v_0,v_1,\ldots,v_{n-2})$ is a $q$-ary $(n-1)$-tuple.  Then let
\[ L^*_S(\mathbf{v})=\{\mathbf{u}\in L(\mathbf{v}):~
 \mathbf{u} \text{~appears in~} S \text{~or~} {-}S^R \}.\]
\end{definition}

It is important to observe that, from the definition of negative orientable, an element of
$L^*_S(\mathbf{v})$, as with any $n$-tuple, can only appear at most once in $S$ or $-S^R$.   We now
state a result on which our bound is built.

\begin{lemma} \label{lemma:even}
Suppose $n\geq 2$, $S=(s_i)$ is a $\mathcal{NOS}_q(n)$, and $\mathbf{v}=(v_0,v_1,\ldots,v_{n-2})$
is a $q$-ary negasymmetric $(n-1)$-tuple.  Then $|L^*_S(\mathbf{v})|$ is even.
\end{lemma}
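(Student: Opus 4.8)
The plan is to set up an explicit involution on the set $L^*_S(\mathbf{v})$ that has no fixed points, which immediately forces the cardinality to be even. Since $\mathbf{v}$ is a negasymmetric $(n-1)$-tuple, the natural candidate for this involution is the map sending an $n$-tuple $\mathbf{u}=(u_0,\ldots,u_{n-1})$ to $-\mathbf{u}^R=(-u_{n-1},\ldots,-u_0)$, or rather some closely related map that stays inside $L(\mathbf{v})$. First I would verify that this map actually lands back in $L(\mathbf{v})$: the defining property of $L^*_S(\mathbf{v})$ is that $\mathbf{u}$ appears in $S$ or in $-S^R$, so if $\mathbf{u}$ appears in $S$ then its reverse-negative appears in $-S^R$ and vice versa, which keeps us within $L^*_S(\mathbf{v})$. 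The key point to check is that the first $n-1$ coordinates of the image still equal $\mathbf{v}$; this is exactly where the negasymmetry of $\mathbf{v}$ should come into play, since $-\mathbf{u}^R$ has first $n-1$ entries $(-u_{n-1},-u_{n-2},\ldots,-u_1)$, and matching these against $\mathbf{v}=(v_0,\ldots,v_{n-2})$ requires $v_j=-u_{n-1-j}$, i.e.\ a negasymmetry relation.

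Once the involution $\phi:\mathbf{u}\mapsto -\mathbf{u}^R$ (appropriately interpreted) is shown to map $L^*_S(\mathbf{v})$ to itself, I would verify it is genuinely an involution ($\phi^2=\mathrm{id}$ follows from $(-\mathbf{u}^R)^R=-\mathbf{u}$ up to sign bookkeeping). The decisive step is to show $\phi$ has no fixed points on $L^*_S(\mathbf{v})$. A fixed point would satisfy $\mathbf{u}=-\mathbf{u}^R$, which is precisely the condition that $\mathbf{u}$ is an $n$-negasymmetric (i.e.\ negasymmetric) $n$-tuple. Here I would invoke Lemma~\ref{lemma:negasymmetry}: a member of $L^*_S(\mathbf{v})$ is $(n-1)$-negasymmetric because its first $n-1$ coordinates form the negasymmetric tuple $\mathbf{v}$; if it were also negasymmetric it would have to be $c$-uniform with $c\in\{0,q/2\}$. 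I would then argue that such a uniform $c$-tuple is itself negasymmetric and hence, being a fixed point, corresponds to a situation forbidden in a $\mathcal{NOS}_q(n)$ — a negative orientable sequence contains no negasymmetric $n$-tuple at all, so no $\mathbf{u}\in L^*_S(\mathbf{v})$ can be negasymmetric. This rules out fixed points entirely.

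With a fixed-point-free involution on a finite set, the conclusion $|L^*_S(\mathbf{v})|$ is even is immediate, since the set partitions into orbits of size exactly $2$. I expect the main obstacle to be the bookkeeping around which coordinate positions are constrained: I must be careful that the map $\mathbf{u}\mapsto-\mathbf{u}^R$ sends the first $n-1$ entries of $\mathbf{u}$ to the first $n-1$ entries of the image in a way consistent with the fixed prefix $\mathbf{v}$, and this is where the negasymmetry hypothesis on $\mathbf{v}$ is used in an essential (rather than decorative) way. A secondary subtlety is correctly handling the case analysis for whether $\mathbf{u}$ appears in $S$ versus $-S^R$, ensuring the involution swaps these roles so that membership in $L^*_S(\mathbf{v})$ is preserved; I would lean on the observation already flagged in the excerpt that each $n$-tuple appears at most once across $S$ and $-S^R$, which guarantees the map is well defined on tuples rather than on occurrences.
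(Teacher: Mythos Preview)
Your proposed involution $\phi:\mathbf{u}\mapsto -\mathbf{u}^R$ does not preserve $L(\mathbf{v})$, and this is not a mere bookkeeping issue. If $\mathbf{u}=(v_0,\ldots,v_{n-2},u_{n-1})\in L(\mathbf{v})$ then
\[
-\mathbf{u}^R=(-u_{n-1},\,-v_{n-2},\,\ldots,\,-v_1,\,-v_0).
\]
Using the negasymmetry $v_i=-v_{n-2-i}$ this becomes $(-u_{n-1},v_0,v_1,\ldots,v_{n-3},\,-v_0)$. The \emph{last} $n-1$ coordinates here are $(v_0,\ldots,v_{n-2})=\mathbf{v}$, not the first $n-1$. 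Your check ``$v_j=-u_{n-1-j}$'' fails already at $j=0$: it would require $v_0=-u_{n-1}$, but $u_{n-1}$ is the free coordinate ranging over all of $\mathbb{Z}_q$. So $\phi$ sends $L(\mathbf{v})$ into $\{(x,v_0,\ldots,v_{n-2}):x\in\mathbb{Z}_q\}$, not back into $L(\mathbf{v})$, and there is no tuple-level involution of the kind you describe.

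The paper's argument is different in character: it works at the level of \emph{positions in the sequence} rather than at the level of tuples. If $(v_0,\ldots,v_{n-2},s_{j+n-1})$ appears in $S$ at position $j$, then by negasymmetry of $\mathbf{v}$ the tuple $(-s_{j+n-2},\ldots,-s_{j},-s_{j-1})=(v_0,\ldots,v_{n-2},-s_{j-1})$ appears in $-S^R$. This pairs each element of $L(\mathbf{v})$ occurring in $S$ with one occurring in $-S^R$; since $S$ is a $\mathcal{NOS}_q(n)$, no $n$-tuple lies in both, so $L^*_S(\mathbf{v})$ splits into two sets of equal size. The partner tuple depends on the entry $s_{j-1}$ \emph{preceding} the occurrence of $\mathbf{v}$ in $S$, which is invisible if you look only at $\mathbf{u}$; this is why no purely tuple-based involution can work and why your Lemma~\ref{lemma:negasymmetry}/fixed-point argument, while plausible-sounding, is aimed at the wrong map.
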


\begin{proof}
Suppose $L^*_S(\mathbf{v})$ is non-empty.  Then, by definition, and (without loss of generality)
assuming that an element of $L(\mathbf{v})$ occurs in $S$ (as opposed to $-S^R$), we know that
$v_i=s_{j+i}$ for some $j$ ($0\leq i\leq n-2$).  Since $\mathbf{v}$ is negasymmetric, it follows
immediately that $v_i=-s_{j+n-2-i}$ ($0\leq i\leq n-2$).  That is, for each occurrence of an
element of $L(\mathbf{v})$ in $S$, there is an occurrence of a, necessarily distinct, element of
$L(\mathbf{v})$ in $-S^R$, and vice versa.  The result follows. \qed
\end{proof}

If $|L(\mathbf{v})|$ is odd, Lemma~\ref{lemma:even} immediately shows that $S$ and $-S^R$ combined
must omit at least one of the $n$-tuples in $L(\mathbf{v})$.  We can now state our main result.

\begin{theorem} \label{theorem:NOS-bounds}
Suppose that $S=(s_i)$ is a $\mathcal{NOS}_q(n)$ ($q\geq2$,
$n\geq 2$). Then the period of $S$ is at most
\begin{align*}
(q^n - q^{\lfloor n/2\rfloor} - q^{\lfloor (n-1)/2\rfloor} + 1)/2 & ~~\mbox{if $q$ is odd}, \\
(q^n - 2q^{(n-1)/2})/2 - 1 & ~~\mbox{if $q$ is even and $n$ is odd}, \\
(q^n - q^{n/2})/2 - 1 & ~~\mbox{if $q$ is even and $n$ is even}.
\end{align*}

\end{theorem}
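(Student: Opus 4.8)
The plan is to reduce the theorem to a counting statement about the set of $n$-tuples that $S$ and $-S^R$ between them must \emph{omit}, and then to extract these omissions from Lemma~\ref{lemma:even} (parity of $|L^*_S(\mathbf{v})|$) together with Lemma~\ref{lemma:negasymmetry} (which negasymmetric tuples can occupy a given $L(\mathbf{v})$). First I would record the basic bookkeeping. Since $S$ is an $n$-window sequence its period $P$ equals the number of distinct $n$-tuples occurring in $S$, and the $n$-tuples occurring in $-S^R$ are exactly $\{-\mathbf{u}^R:\mathbf{u}\text{ occurs in }S\}$. Let $T$ be the set of all $n$-tuples occurring in $S$ or in $-S^R$; by negative orientability these two collections are disjoint, so $|T|=2P$. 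Every element of $T$ lies in $N^*_q(n)$, and $T$ is invariant under the involution $\mathbf{u}\mapsto-\mathbf{u}^R$, which is fixed-point-free on non-negasymmetric tuples. Writing $N_n$ and $N_{n-1}$ for the numbers of negasymmetric $n$- and $(n-1)$-tuples (Lemma~\ref{lemma:negasymmetric}), we have $|N^*_q(n)|=q^n-N_n$, so that $2P=|T|=q^n-N_n-|N^*_q(n)\setminus T|$; the whole task is thus to lower-bound the number of non-negasymmetric tuples forced out of $T$.

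Next I would localise using the partition of all $n$-tuples by their $(n-1)$-prefix into the sets $L(\mathbf{v})$, each of size $q$. For a negasymmetric $(n-1)$-prefix $\mathbf{v}$, Lemma~\ref{lemma:even} gives that $|T\cap L(\mathbf{v})|$ is even. The point is then to count how many tuples of $L(\mathbf{v})$ are themselves negasymmetric: any such tuple is both negasymmetric and $(n-1)$-negasymmetric, so by Lemma~\ref{lemma:negasymmetry} it must be $c$-uniform with $c\in\{0,q/2\}$, which in turn forces $\mathbf{v}$ itself to be $c$-uniform. Hence for a negasymmetric $\mathbf{v}$ that is neither $0$-uniform nor $(q/2)$-uniform, all $q$ tuples of $L(\mathbf{v})$ are non-negasymmetric, whereas for the uniform prefixes exactly one of the $q$ tuples is negasymmetric, leaving $q-1$ non-negasymmetric candidates for membership in $T$.

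For $q$ odd the only uniform value is $c=0$. There are $N_{n-1}-1=q^{\lfloor(n-1)/2\rfloor}-1$ negasymmetric $(n-1)$-prefixes $\mathbf{v}\neq\mathbf{0}$ (Lemma~\ref{lemma:negasymmetric} applied to length $n-1$), and for each of these $L(\mathbf{v})$ consists of $q$ non-negasymmetric tuples with $|T\cap L(\mathbf{v})|$ even and strictly less than the odd number $q$; so at least one tuple of $L(\mathbf{v})$ is omitted from $T$. These omitted tuples lie in disjoint sets $L(\mathbf{v})$, hence are distinct and non-negasymmetric, giving $|N^*_q(n)\setminus T|\geq q^{\lfloor(n-1)/2\rfloor}-1$ and therefore $2P\leq q^n-q^{\lfloor n/2\rfloor}-q^{\lfloor(n-1)/2\rfloor}+1$, which is the stated bound.

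For $q$ even I would instead take the single prefix $\mathbf{v}=\mathbf{0}$, which is negasymmetric: exactly one of the $q$ tuples of $L(\mathbf{0})$ (the all-zero tuple) is negasymmetric, leaving $q-1$ non-negasymmetric candidates; since $q-1$ is odd and $|T\cap L(\mathbf{0})|$ is even, at least one non-negasymmetric tuple is omitted from $T$. Finally, because $N^*_q(n)$ and $T$ are both invariant under the fixed-point-free involution $\mathbf{u}\mapsto-\mathbf{u}^R$, the omitted set $N^*_q(n)\setminus T$ is invariant too and so has even cardinality; a single forced omission therefore entails at least two, yielding $2P\leq q^n-N_n-2$, i.e.\ exactly one less than the simple bound of Corollary~\ref{corollary:simple-NOS-bound} in each parity of $n$. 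The main obstacle I expect is precisely this combination of the two local lemmas: correctly identifying which prefixes force an omission (and not miscounting the handful of uniform prefixes), and, in the even case, upgrading the single forced omission to a pair via the involution parity argument.
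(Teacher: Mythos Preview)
Your proof is correct and, for $q$ odd, identical to the paper's: both localise via $L(\mathbf{v})$ for the $q^{\lfloor(n-1)/2\rfloor}-1$ non-uniform negasymmetric $(n-1)$-prefixes and invoke Lemma~\ref{lemma:even} to force one omission from each.

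For $q$ even there is a small but genuine difference. The paper obtains its two forced omissions by applying the parity lemma separately to the \emph{two} uniform prefixes $\mathbf{0}$ and $(q/2,\ldots,q/2)$, each contributing one omitted non-negasymmetric tuple in disjoint blocks $L(\mathbf{v})$. You instead extract a single omission from $L(\mathbf{0})$ and then upgrade it to two via the global observation that $N^*_q(n)\setminus T$ is closed under the fixed-point-free involution $\mathbf{u}\mapsto-\mathbf{u}^R$ and hence has even size. Both arguments are valid; the paper's is slightly more concrete (it tells you two disjoint places where omissions occur), while yours is a touch slicker and avoids having to name the second prefix.
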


\begin{proof}

The proof involves considering the set $N^*_q(n)$ of non-negasymmetric $q$-ary $n$-tuples, and
showing that the tuples in certain disjoint subsets of this set cannot all occur in a
$\mathcal{NOS}_q(n)$. We divide our discussion into two cases depending on the parity of $q$.

\begin{itemize}
\item First suppose that $q$ is odd.  Suppose also that $\mathbf{v}$ is a negasymmetric
    non-uniform $(n-1)$-tuple. From Lemma~\ref{lemma:negasymmetric} there are
    $q^{\lfloor(n-1)/2\rfloor}$ negasymmetric $(n-1)$-tuples of which precisely one, namely the
    all-zero tuple, is uniform --- since $q$ is odd; i.e.\ there are
    $q^{\lfloor(n-1)/2\rfloor}-1$ such $\mathbf{v}$. From Lemma~\ref{lemma:negasymmetry}, none
    of the $q$ elements of $L(\mathbf{v})$ are negasymmetric, and hence $|L(\mathbf{v})\cap
    N^*_q(n)|=q$.  Since $|L(\mathbf{v})|=q$ is odd, Lemma~\ref{lemma:even} implies that
    $L^*_S(\mathbf{v})$ must omit at least one of the $n$-tuples in $L(\mathbf{v})$.  That is,
    there are $q^{\lfloor(n-1)/2\rfloor}-1$ non-negasymmetric $n$-tuples which cannot appear in
    $S$ or $-S$.  The bound for $q$ odd follows from
    Corollary~\ref{corollary:simple-NOS-bound}.

\item Now suppose $q$ is even. Let $\mathbf{v}$ be a $c$-uniform $(n-1)$-tuple for $c=0$ or
    $c=q/2$. There are clearly two such $(n-1)$-tuples. By Lemma~\ref{lemma:negasymmetry}
    precisely one of the elements of $L^*_S(\mathbf{v})$ is negasymmetric, namely the relevant
    $c$-uniform $n$-tuple, and hence $|L(\mathbf{v})\cap N^*_q(n)|=q-1$. Now, since $q$ is
    even, Lemma~\ref{lemma:even} implies that at least one element of $L(\mathbf{v})\cap
    N^*_q(n)$ cannot be contained in $L^*_S(\mathbf{v})$. The result for $q$ even follows from
    Corollary~\ref{corollary:simple-NOS-bound}. \qed
\end{itemize}
\end{proof}

We note that, as in the case of orientable sequences, all these bounds on negative orientable sequences are asymptotically of the same order as $q^n/2$.

We conclude by tabulating the values of the bounds of Theorem~\ref{theorem:NOS-bounds} for small
$q$ and $n$.
\begin{table}[htb]
\caption{Bounds on the period of a $\mathcal{NOS}_q(n)$ (from Theorem~\ref{theorem:NOS-bounds})}
\label{table_bounds}
\begin{center}
\begin{tabular}{crrrrrrr} \hline
 ~    & $n=2$ & $n=3$ & $n=4$ & $n=5$ & $n=6$ & $n=7$ & $n=8$ \\ \hline
$q=2$ &    0  &     1 &     5 &    11 &    27 &     55 &     119 \\
$q=3$ &    3  &    11 &    35 &   113 &   347 &   1067 &    3227 \\
$q=4$ &    5  &    27 &   119 &   495 &  2015 &   8127 &   32639 \\
$q=5$ &   10  &    58 &   298 &  1538 &  7738 &  38938 &  194938 \\
$q=6$ &   14  &   101 &   629 &  3851 & 23219 & 139751 &  839159 \\
$q=7$ &   21  &   165 &  1173 &  8355 & 58629 & 411429 & 2881029 \\
$q=8$ &   27  &   247 &  2015 & 16319 &130815 &1048063 & 8386559 \\
\hline
\end{tabular}
\end{center}
\end{table}

For the case $n=2$, the bound simplifies to $(q^2-q)/2$ for $q$ odd, and $(q^2-q)/2-1$ for $q$
even. As we next show, in this case the bound is tight.


\subsection{Construction I: Sequences with \texorpdfstring{$n=2$}{n=2}}  \label{section:NOSq2}

The first method of construction we present provides negative orientable sequences of maximal
length for every $q>3$ in the case $n=2$.  The method involves joining certain explicitly defined
cycles.

Consider the directed graph $G_q$ with vertex set $\mathbb Z_q$, $q \ge 3$, and all directed arcs
$(x,y)$, $x,y \in \mathbb Z_q$, except for $(x,-x)$, $x \in \mathbb Z_q$. Then a
$\mathcal{NOS}_q(2)$ $S$ corresponds to a circuit in $G_q$. Moreover $-S^R$ is also such a circuit
and these two circuits are arc-disjoint. We say that a circuit $C=[c_0,c_1,\dots,c_{m-1}]$ of
length $m$, consisting of arcs $(c_i,c_{i+1})$, $i=0,\dots,m-1$, identifying $c_m$ with $c_0$, in
$G_q$ is \emph{nice} if it is arc-disjoint from $-C^R$. We note that if $C'$ is another nice
circuit in $G_q$ that is arc-disjoint from $C$ and $-C^R$ and if it shares a vertex with $C$ then
$C$ and $C'$ may be joined to make a nice circuit of length the sum of the lengths of $C$ and $C'$.

Suppose that $q$ is odd. Consider the following circuits in $G_q$.

\begin{align*}
C_0 & = \left[0110220\dots 0\frac{q-1}{2}\frac{q-1}{2}\right] \text{~~of length~} 3\frac{q-1}{2} \\
C_1 & = \left[121(-2)131(-3)1\dots 1\frac{q-1}{2}1(-\frac{q-1}{2})\right] \text{~~of length~} 4\frac{q-3}{2} \\
    & \vdots \\
C_i & = \left[i(i+1)i-(i+1)i\dots i\frac{q-1}{2}i-\frac{q-1}{2}\right] \text{~~of length~}
4\frac{q-i-2}{2} \\
    & \vdots \\
C_{\frac{q-3}{2}}& = \left[\frac{q-3}{2}\frac{q-1}{2}\frac{q-3}{2}-\frac{q-1}{2}\right] \text{~~of length 4}.
\end{align*}

These circuits are arc-disjoint since by construction the arcs of $C_i$ involve the vertex $i$,
with the exception of the loops $(j,j)$ in $C_0$, in such a way that there is no repetition.
Moreover they are arc-disjoint to $-C_i^R$, $i=0,\dots,\frac{q-3}{2}$ since by construction the
arcs of $-C_i^R$ involve the vertices $-j$ in such a way that there is no repetition. Thus they are
mutually nice circuits and it may be seen that these circuits may be joined to give a
$\mathcal{NOS}_q(2)$ $C$. A simple calculation shows that $C$ has period $\frac{q(q-1)}{2}$ and
therefore is maximal by Theorem~\ref{theorem:NOS-bounds}.

Suppose that $q$ is even. Consider the following circuits in $G_q$.

\begin{align*}
C_0 & =\left[0110220\dots 0\frac{q-2}{2}\frac{q-2}{2}\right] \text{~~of length~} 3\frac{q-2}{2} \\
C_1 & =\left[121(-2)131(-3)1\dots 1\frac{q-2}{2}1(-\frac{q-2}{2})1\frac{q}{2}\right] \text{~~of length~}
4\frac{q-4}{2}+2 \\
 & \vdots \\
C_i & =\left[i(i+1)i-(i+1)i\dots i\frac{q-2}{2}i-\frac{q-2}{2}i\frac{q}{2}\right] \text{~~of length~} 4\frac{q-2i-2}{2}+2 \\
 & \vdots \\
C_{\frac{q-4}{2}} & =\left[\frac{q-4}{2}\frac{q-2}{2}\frac{q-4}{2}-\frac{q-2}{2}\frac{q-4}{2}\frac{q}{2}\right]
\text{~~of length~} 6 \\
C_{\frac{q-2}{2}} & =\left[\frac{q-2}{2}\frac{q}{2}\right] \text{~~of length~} 2.
\end{align*}

By a similar analysis it can be seen that these circuits may be joined to give a
$\mathcal{NOS}_q(2)$ $C$. A straightforward calculation shows that $C$ has period
$\frac{q(q-1)}{2}-1$ and therefore is maximal, again by Theorem~\ref{theorem:NOS-bounds}.

These examples establish the following Lemma by construction.

\begin{lemma}  \label{lemma:NOSn=2}
There exists a maximal $\mathcal{NOS}_q(2)$ for all $q \ge 3$.
\end{lemma}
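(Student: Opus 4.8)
The plan is to establish the existence of a maximal $\mathcal{NOS}_q(2)$ by direct construction, exhibiting an explicit family of arc-disjoint good circuits in the digraph $G_q$ that can be concatenated into a single circuit of maximal length. The key structural observation, already noted in the excerpt, is that a $\mathcal{NOS}_q(2)$ corresponds exactly to a good circuit $C$ in $G_q$ (one that is arc-disjoint from $-C^R$), and that two good circuits sharing a vertex and being mutually arc-disjoint (with each other and with the relevant reverses) can be spliced together into a longer good circuit. So the whole argument reduces to producing enough mutually good circuits whose total length hits the bound from Theorem~\ref{theorem:NOS-bounds} and which pairwise share vertices so that the splicing can be carried out.

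First I would treat the two parities of $q$ separately, since the target period differs: $q(q-1)/2$ when $q$ is odd, and $q(q-1)/2-1$ when $q$ is even. In each case I would define a parametrised family of circuits $C_0, C_1, \ldots, C_{\lfloor(q-2)/2\rfloor}$ (as written in the excerpt). The circuit $C_i$ is designed so that every arc it uses is incident to the vertex $i$, apart from the loops $(j,j)$ living inside $C_0$; this ``anchoring at vertex $i$'' is the device that guarantees arc-disjointness across different $C_i$, because two distinct anchors $i\neq i'$ can only collide on arcs of the form $(i,i')$ or $(i',i)$, and the explicit listing shows no such coincidence arises. I would then verify arc-disjointness from the reverses $-C_i^R$: since the arcs of $-C_i^R$ are instead anchored at $-i$ rather than $i$, and $i \neq -i$ for the relevant values (here the oddness of $q$, or the careful use of the middle vertex $q/2$ in the even case, is what keeps $i$ and $-i$ distinct and prevents an arc from coinciding with its own negated reverse).

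The main obstacle, and the step deserving the most care, is the bookkeeping in the even case. The subtlety is that the vertex $q/2$ satisfies $q/2 = -(q/2)$, so it is its own negative; the forbidden arcs of $G_q$ are exactly the $(x,-x)$, and arcs touching $q/2$ must be handled so that neither a forbidden arc nor an arc shared with a reverse is ever used. This is why the even-case circuits carry the extra terminal segment $i\,\frac{q}{2}$ and why a separate short circuit $C_{(q-2)/2}=[\frac{q-2}{2}\,\frac{q}{2}]$ appears: these absorb the interaction with the self-negative vertex and account for the ``$-1$'' deficit relative to the odd case. After confirming arc-disjointness I would check that consecutive circuits in the list share a vertex (indeed $C_i$ and $C_{i+1}$ meet because $i+1$ appears in $C_i$), so the splicing into a single good circuit $C$ goes through.

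Finally I would carry out the length computation: summing the stated lengths of the $C_i$ gives $q(q-1)/2$ in the odd case and $q(q-1)/2-1$ in the even case. Comparing against the $n=2$ specialisation of Theorem~\ref{theorem:NOS-bounds}, namely $(q^2-q)/2$ for $q$ odd and $(q^2-q)/2-1$ for $q$ even, shows the constructed sequence attains the bound and is therefore maximal, which establishes Lemma~\ref{lemma:NOSn=2}. I expect the arc-disjointness verification to be essentially a matter of carefully reading off the arc sets from the ring sequences, with the only genuinely delicate point being the treatment of the self-negative vertex $q/2$ in the even case.
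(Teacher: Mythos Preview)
Your proposal is correct and follows essentially the same approach as the paper: the paper also proves Lemma~\ref{lemma:NOSn=2} by exhibiting the explicit families of circuits $C_0,\ldots,C_{\lfloor(q-3)/2\rfloor}$ (odd $q$) and $C_0,\ldots,C_{(q-2)/2}$ (even $q$) in $G_q$, arguing arc-disjointness via the ``anchored at vertex $i$'' observation, joining them at shared vertices, and summing the lengths to hit the bound of Theorem~\ref{theorem:NOS-bounds}. Your identification of the self-negative vertex $q/2$ as the delicate point in the even case, and of the extra tail segments and the short circuit $C_{(q-2)/2}$ as the mechanism accounting for the $-1$ deficit, matches the paper's construction exactly.
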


\begin{note}  \label{note:constant2}
Observe that the maximal $\mathcal{NOS}_q(2)$ constructed using the approaches just described all
possess the $i$-uniform 2-tuples $(i,i)$ for $1\leq i<q/2$.
\end{note}

\subsection{Construction II:  A construction for general \texorpdfstring{$n$}{n}}  \label{section:NOS-generaln}

The second method of construction involves the notion of the pseudoweight of an $n$-tuple, which is
equal to the sum of the values in a tuple after changing all the zeros to $q/2$.  The construction
then builds on the observation that if the pseudoweights of all the tuples in a sequence are less
than half the maximum possible value ($nq/2$) then the pseudoweight of all the $n$-tuples in the
negative sequence will be greater than half the maximum possible value.

We first need the following formal definition of pseudoweight.

\begin{definition}  \label{definition:pseudoweight}
Suppose $\mathbf{u}=(u_0,u_1,\ldots,u_{n-1})$ is an $n$-tuple of elements of $\mathbb{Z}_q$
($q>1$). Define the function $f:\mathbb{Z}_q\rightarrow\mathbb{Q}$ as follows: for any
$u\in\mathbb{Z}_q$ treat $u$ as an integer in the range $[0,q-1]$ and set $f(u)=u$ if $u\neq0$ and
$f(u)=q/2$ if $u=0$. Then the \emph{pseudoweight} of $\mathbf{u}$ is defined to be the sum
\[ w^*(\mathbf{u}) = \sum_{i=0}^{n-1}f(u_i) \]
where the sum is computed in $\mathbb{Q}$.
\end{definition}

As a simple example for $q=3$, the 4-tuple $(0,1,1,2)$ has weight $0+1+1+2=4$ and pseudoweight
$1.5+1+1+2=5.5$, since $f(0)=\frac{3}{2}$.

\begin{theorem}
Suppose $S$ is a $q$-ary $n$-window sequence ($n\geq 2$) with the property that all the
$n$-tuples appearing in $S$ have pseudoweight less than $nq/2$.  Then $S$ is a $\mathcal{NOS}_q(n)$.
\end{theorem}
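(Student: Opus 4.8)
The plan is to exploit the way the pseudoweight treats the symbol $0$, and to show that negating-and-reversing an $n$-tuple complements its pseudoweight with respect to $nq$. The key observation I would establish first is the pointwise identity $f(u)+f(-u)=q$ for every $u\in\mathbb{Z}_q$. For $u\neq 0$, negation sends $u$ (viewed as an integer in $[1,q-1]$) to $q-u$, which is again a nonzero residue in $[1,q-1]$, so $f(u)+f(-u)=u+(q-u)=q$. For $u=0$ we have $-u=0$ as well, and the special convention $f(0)=q/2$ gives $f(0)+f(0)=q$. This uniform pairing is precisely the reason the pseudoweight is defined to assign $q/2$ (rather than $0$) to the zero symbol, and it is the crux of the whole argument.

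Summing the identity over the coordinates of an $n$-tuple, and noting that reversal merely permutes the summands, I would deduce $w^*(-\mathbf{u}^R)=nq-w^*(\mathbf{u})$ for every $n$-tuple $\mathbf{u}$. With this in hand the conclusion follows by contradiction. Suppose $S$ were not a $\mathcal{NOS}_q(n)$; then by Definition~\ref{definition:NOS} there exist indices $i,j$ with $\mathbf{s}_n(i)=-\mathbf{s}_n(j)^R$. Writing $\mathbf{u}=\mathbf{s}_n(i)$ and $\mathbf{w}=\mathbf{s}_n(j)$, both tuples appear in $S$, so by hypothesis $w^*(\mathbf{u})<nq/2$ and $w^*(\mathbf{w})<nq/2$. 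But the relation $\mathbf{u}=-\mathbf{w}^R$ together with the complementation identity forces $w^*(\mathbf{u})=nq-w^*(\mathbf{w})>nq-nq/2=nq/2$, contradicting $w^*(\mathbf{u})<nq/2$.

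I do not anticipate a serious obstacle here: once the complementation identity $w^*(-\mathbf{u}^R)=nq-w^*(\mathbf{u})$ is secured, the remainder is a one-line inequality. The only point requiring genuine care is the bookkeeping at the zero symbol --- ensuring that the convention $f(0)=q/2$ is applied consistently so that $f(u)+f(-u)=q$ holds without exception. As a consistency check I would note that a negasymmetric tuple $\mathbf{u}=-\mathbf{u}^R$ then necessarily satisfies $w^*(\mathbf{u})=nq/2$, so the strict inequality in the hypothesis automatically rules out the negasymmetric tuples that a $\mathcal{NOS}_q(n)$ is in any case forbidden to contain.
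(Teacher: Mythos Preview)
Your proposal is correct and follows essentially the same route as the paper: both arguments hinge on the pointwise identity $f(u)+f(-u)=q$ (equivalently $f(-u)=q-f(u)$), sum it to obtain $w^*(-\mathbf{u}^R)=nq-w^*(\mathbf{u})$, and then conclude that no tuple in $S$ can equal the negative reverse of another. Your write-up is slightly more explicit about the $u=0$ case and phrases the final step as a contradiction rather than a direct inequality comparison, but these are stylistic rather than substantive differences.
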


\begin{proof}
Consider any $n$-tuple $\mathbf{u}$ occurring in $S$, and by definition we know that
$w^*(\mathbf{u})<nq/2$. We claim that $w^*(-\mathbf{u}^R)=nq-w^*(\mathbf{u})$. This follows
immediately from the definition of $f$ since $f(-u_i)=q-f(u_i)$ for every possible value of $u_i$.

Hence, since $w^*(\mathbf{u})<nq/2$, it follows immediately that
$w^*(-\mathbf{u}^R)>nq/2$.  Thus the $n$-tuples in $-S^R$ are all distinct from the $n$-tuples in
$S$. Moreover, the $n$-tuples in $S$ (and in $-S^R$) are all distinct because $S$ is an $n$-window
sequence. Hence $S$ is a $\mathcal{NOS}_q(n)$. \qed
\end{proof}

It remains to establish how to construct the desired sequences $S$.  Suppose, for $q>2$ and
$n\geq2$, $G$ is a directed graph with vertices the $q$-ary ${(n-1)}$-tuples with pseudoweight less
than $nq/2-1$, and with a directed edge connecting vertices $\mathbf{u}=(u_0,u_1,\ldots,u_{n-2})$
and $\mathbf{v}=(v_0,v_1,\ldots,v_{n-2})$ if and only if $u_i=v_{i-1}$, $(1\leq i\leq n-2)$, and
$w^*(\mathbf{u})+f(v_{n-2})<nq/2$, where $f$ is as in Definition~\ref{definition:pseudoweight}.
As is conventional, we identify the edge
connecting $(u_0,u_1,\ldots,u_{n-2})$ to $(u_1,u_2,\ldots,u_{n-2},v)$ with the $n$-tuple
$(u_0,u_1,\ldots,u_{n-2},v)$, and $G$ contains as an edge every $q$-ary $n$-tuple with pseudoweight
less than $nq/2$.

A directed circuit in $G$ will clearly give us a sequence $S$ with the desired property. Consider
any vertex $\mathbf{u}=(u_0,u_1,\ldots,u_{n-2})$. An incoming edge
\[ (s,u_0,u_1,\ldots,u_{n-2}) \]
must satisfy $s+w^*(\mathbf{u})<nq/2$. Similarly an outgoing edge
\[(u_0,u_1,\ldots,u_{n-2},t)\]
must satisfy $t+w^*(\mathbf{u})<nq/2$.  That is, the in-degree of every vertex is the same as its
out-degree. Moreover, $G$ is clearly connected, as it is straightforward to construct a `low
pseudoweight' directed path between any pair of vertices --- that is, for any vertex
$\mathbf{u}=(u_0,u_1,\ldots,u_{n-2})$ there is a path to $(1,1,\ldots,1)$ via vertices of the form
$(u_i,u_{i+1},\ldots,u_{n-2},1,1,\ldots,1)$ and similarly from $(1,1,\ldots,1)$ to any vertex
$\mathbf{v}$. This means there exists a directed Eulerian Circuit in $G$ (see, for example,
\cite[Corollary 6.1]{Gibbons85}), yielding a $q$-ary $n$-window sequence $S$ with the property that
all the $n$-tuples appearing in $S$ have weight less than $nq/2$, and with period equal to the
number of $q$-ary $n$-tuples with pseudoweight less than $nq/2$.

Let $r_{q,n,s}$ ($q\geq2$, $n\geq 1$) denote the number of $q$-ary $n$-tuples with pseudoweight
exactly $s$, where $r_{q,n,s}=0$ by definition if $s<n$ or $s>n(q-1)$. Then the above discussion
establishes the following result.

\begin{lemma}  \label{lemma:NOS-generaln}
There exists a $\mathcal{NOS}_q(n)$ of period $\frac{q^n-r_{q,n,nq/2}}{2}$ for all $q>2$ and $n\geq
2$.
\end{lemma}

\begin{proof}
There are trivially $q^n$ $q$-ary $n$-tuples, and for every $n$-tuple $\mathbf u$ of
pseudoweight less than  $nq/2$ there is a corresponding `pseudonegative'
$n$-tuple $-\mathbf u$ of weight greater than $nq/2$.  The result follows.  \qed
\end{proof}

\subsection{Enumerating \texorpdfstring{$n$}{n}-tuples with given pseudoweight}

Lemma~\ref{lemma:NOS-generaln} means that it is of interest to know the value of $r_{q,n,s}$. If
$q$ is even then $r_{q,n,s}$ is only defined for integer values $s$, and if $q$ is odd it is only
defined when $s$ is a multiple of 0.5. We have the following elementary result.

\begin{lemma}  \label{lemma:rqns}
Suppose $q\geq 2$ and $n\geq 1$.
\begin{enumerate}
\item[(i)] If $q$ is even, $r_{q,1,i}=1$ for $1\leq i\leq q-1$, $i\neq q/2$, and
    $r_{q,1,q/2}=2$.
\item[(ii)] If $q$ is odd, $r_{q,1,i}=1$ for $1\leq i\leq q-1$, and $r_{q,1,q/2}=1$.
\item[(iii)] If $q$ is even, $\sum_{i=n}^{n(q-1)}r_{q,n,i} = q^n$.
\item[(iv)] If $q$ is odd, $\sum_{j=0}^{2n(q-2)}r_{q,n,n+j/2} = q^n$.
\item[(v)] If $n>1$, $r_{q,n,s} = \sum_{i=1}^{q-1}r_{q,n-1,s-i}+r_{q,n-1,s-q/2}$.
\end{enumerate}

\end{lemma}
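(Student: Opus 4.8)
The plan is to derive all five parts directly from the definition of the pseudoweight function $f$, since each is an elementary counting statement and no single deep idea is required. The unifying observation is that $f$ restricts to the identity on $\{1,2,\ldots,q-1\}$ while sending $0$ to $q/2$; thus the multiset of single-symbol pseudoweights consists of $1,2,\ldots,q-1$ together with one extra contribution of $q/2$ coming from the symbol $0$. Everything else is bookkeeping about how this extra value interacts with the parity of $q$.

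For parts (i) and (ii) I would simply tabulate $f(u)$ for $u\in\mathbb{Z}_q$ in the case $n=1$, where $r_{q,1,s}$ counts the symbols $u$ with $f(u)=s$. When $q$ is even, $q/2$ is an integer attained by \emph{both} $u=0$ and $u=q/2$, while every other value $i\in\{1,\ldots,q-1\}$ is attained by exactly one symbol, giving (i). When $q$ is odd, $q/2$ is a half-integer distinct from every integer value $f(u)=u$, so $u=0$ is the unique symbol of pseudoweight $q/2$ and each integer $i\in\{1,\ldots,q-1\}$ is likewise attained exactly once, giving (ii).

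For parts (iii) and (iv) the content is just that summing $r_{q,n,s}$ over every attainable pseudoweight recovers the total number $q^n$ of $q$-ary $n$-tuples; the only real task is to pin down the index set. Since $1\leq f(u)\leq q-1$ for every $u$ (using $1\leq q/2\leq q-1$ when $q\geq2$), every $n$-tuple has pseudoweight between $n$, attained by the all-ones tuple, and $n(q-1)$. When $q$ is even each $f(u)$ is an integer, so the pseudoweight is an integer in $[n,n(q-1)]$, yielding (iii). When $q$ is odd the only non-integral contribution is $q/2$ per zero entry, so the pseudoweight is a multiple of $1/2$; writing it as $n+j/2$ and checking that the endpoints $n$ and $n(q-1)$ correspond to $j=0$ and $j=2n(q-2)$ yields (iv).

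For part (v) I would condition on the final coordinate $u_{n-1}$. Deleting it sends each $n$-tuple of pseudoweight $s$ to an $(n-1)$-tuple of pseudoweight $s-f(u_{n-1})$, and for each fixed last symbol this is a bijection onto all $(n-1)$-tuples. Summing the count $r_{q,n-1,\,s-f(u_{n-1})}$ over the $q$ choices of last symbol, and substituting $f(i)=i$ for $i=1,\ldots,q-1$ and $f(0)=q/2$, gives exactly $\sum_{i=1}^{q-1}r_{q,n-1,s-i}+r_{q,n-1,s-q/2}$. The only point that requires genuine care is the half-integer index bookkeeping in part (iv), namely verifying that the stated range of $j$ captures precisely the attainable pseudoweights; the remaining steps are routine.
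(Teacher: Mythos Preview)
Your proposal is correct and follows essentially the same approach as the paper: the paper dismisses (i)--(iv) as ``trivially true'' with the single parenthetical observation that both $(0)$ and $(q/2)$ have pseudoweight $q/2$, and proves (v) by conditioning on the final entry of the $n$-tuple, exactly as you do. Your write-up simply supplies the details the paper omits, including the explicit verification that the index range in (iv) matches the attainable pseudoweights.
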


\begin{proof}
Parts (i)--(iv) are trivially true (observing that the 1-tuples $(0)$ and $(q/2)$ both have
pseudoweight $q/2$). Part (v) follows immediately by considering the $q$ possible ways of adding a
final entry to an $(n-1)$-tuple to make an $n$-tuple of a given pseudoweight, observing that
inserting $0$ as the final entry will increase the pseudoweight by $q/2$. \qed
\end{proof}

We can also make some simple observations.

\begin{corollary}  \label{corollary:rqns}
Suppose $q\geq2$.
\begin{enumerate}
\item[(i)] If $q$ is odd and $2\leq w\leq 2q-2$, then $r_{q,2,q}=q$ and $r_{q,2,w}<q$ if $w\neq
    q$.
\item[(ii)] If $q$ is even and $2\leq w\leq 2q-2$, then $r_{q,2,q}=q+2$ and $r_{q,2,w}\leq q$
    if $w\neq q$.
\item[(iii)] If $q$ is odd, $n\geq 3$ and $n\leq w\leq n(q-1)$, then $r_{q,n,w}<q^{n-1}$.
\item[(iv)] If $q$ is even, $n\geq 3$ and $n\leq w\leq n(q-1)$, then $r_{q,n,w}\leq
    q^{n-3}(q^2+2)$.
\item[(v)] If $n\geq 1$ and $0\leq i\leq 2n$ then $r_{3,n,n+i/2}=N_2(i,n)$, where $N_2(i,n)$ is
    a trinomial coefficient (see Section~\ref{section:enumerating_weight} below).
\item[(vi)] If $1\leq n\leq w\leq 3n$, then $r_{4,n,w}=\binom{2n}{w-n}$.
\end{enumerate}
\end{corollary}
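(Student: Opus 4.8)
My plan is to read parts (iii) and (iv) with the subscript $r_{q,n,w}$ (the ``$2$'' looks like a slip for ``$n$''), and to base everything on the single-symbol pseudoweight generating function $P(x)=\sum_{i=1}^{q-1}x^i+x^{q/2}$, so that $r_{q,n,\cdot}$ is exactly the coefficient sequence of $P(x)^n$ (with $x^{q/2}$ interpreted as a half-integer power when $q$ is odd). Since the exponents $1,\dots,q-1$ are symmetric about $q/2$ and the extra $x^{q/2}$ sits at that centre, $P$ is palindromic, hence $P(x)^n$ is palindromic about $x^{nq/2}$; the binding quantity is the central coefficient $r_{q,n,nq/2}$, which is precisely the term subtracted in Lemma~\ref{lemma:NOS-generaln}. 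For (i) and (ii) I would just expand $P(x)^2$. The ``both-nonzero'' contribution $(\sum_{i=1}^{q-1}x^i)^2$ is the tent function $\min(w-1,2q-1-w)$, peaking at $w=q$ with value $q-1$. When $q$ is odd, $x^{q/2}$ is a half-integer power, so for integer $w$ it only meets itself, contributing the single tuple $(0,0)$ at $w=q$; hence $r_{q,2,q}=q$, while $r_{q,2,w}\le q-2<q$ for other integer $w$ and $=2<q$ at half-integer $w$, which is (i). When $q$ is even the square also produces the cross term $2x^{q/2}\sum_i x^i$ and the term $x^{q}$, adding $2$ and $1$ at the centre; thus $r_{q,2,q}=(q-1)+2+1=q+2$, and elsewhere the total is at most $(q-2)+2=q$, which is (ii).

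For (iii) I would induct on $n$ with Lemma~\ref{lemma:rqns}(v). For odd $q$ this writes $r_{q,n,w}$ as a sum of exactly $q$ values of $r_{q,n-1,\cdot}$ at $q$ \emph{distinct} arguments. Each summand is at most $q^{n-2}$, and since the arguments are distinct at most one can be the unique maximising argument (for the base case $n-1=2$; for $n-1\ge3$ every summand is already strictly smaller by induction), so at least one summand is strictly below $q^{n-2}$ and the sum is strictly below $q\cdot q^{n-2}=q^{n-1}$. Equivalently and more transparently: fixing an $(n-1)$-prefix, its $q$ one-symbol extensions have $q$ distinct pseudoweights, so each prefix has at most one extension of total pseudoweight $w$, giving $r_{q,n,w}\le q^{n-1}$; and the all-ones and all-$(q-1)$ prefixes cannot both reach a common $w$ once $(n-2)(q-2)>0$, so at least one prefix fails and $r_{q,n,w}\le q^{n-1}-1$.

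Part (iv) is where I expect the real difficulty. For even $q$ the coincidence $f(0)=f(q/2)=q/2$ doubles the central contribution and destroys the ``distinct arguments'' step: Lemma~\ref{lemma:rqns}(v) now puts weight $2$ on the argument $w-q/2$. Peeling the last two symbols and invoking (ii) gives the clean estimate
\[ r_{q,n,w}=\sum_{v}r_{q,n-2,v}\,r_{q,2,w-v}\le q^{\,n-1}+2\,r_{q,n-2,\,w-q}, \]
since $r_{q,2,\cdot}\le q$ except for the single central value $q+2$. Matching the stated closed form $q^{n-3}(q^2+2)=q^{n-1}+2q^{n-3}$ then amounts to showing $r_{q,n-2,\,w-q}\le q^{n-3}$, and this central bound genuinely fails for small indices (already $r_{q,2,q}=q+2$, and for $q=4,n=3$ one computes $r_{4,3,6}=\binom{6}{3}=20>18$). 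So I would establish the honest recursion $r_{q,n,w}\le q^{n-1}+2\,r_{q,n-2,w-q}$, settle the smallest cases of $n$ by direct calculation, and identify precisely the range of $n$ for which the tidy constant $q^{n-3}(q^2+2)$ is actually valid; pinning down that constant is the crux.

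Finally, (v) and (vi) are generating-function identities. For $q=3$ I set $x=y^{1/2}$, so that $P$ becomes $y^2(1+y+y^2)$ and $P^n=y^{2n}(1+y+y^2)^n$; reading off the coefficient of $y^{2n+i}$ (the power corresponding to pseudoweight $n+i/2$) yields $r_{3,n,n+i/2}=[y^i](1+y+y^2)^n$, which is by definition the trinomial coefficient $N_2(i,n)$. For $q=4$ the symbols $0$ and $2$ share $f$-value $2$, so $P(x)=x+2x^2+x^3=x(1+x)^2$ and $P^n=x^n(1+x)^{2n}$, whence $r_{4,n,w}=[x^{w-n}](1+x)^{2n}=\binom{2n}{w-n}$, as required.
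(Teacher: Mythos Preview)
Your generating-function packaging is essentially the paper's recursion argument recast: for (i), (ii), (v) and (vi) the paper expands Lemma~\ref{lemma:rqns}(v) term by term and identifies the resulting coefficient recursions with those of $(1+x+x^2)^n$ and $(1+x)^{2n}$, which is exactly your $P(x)^n$ computation. For (iii) the paper also inducts on $n$ from the $n=2$ values via Lemma~\ref{lemma:rqns}(v), noting that for odd $q$ the $q$ arguments in the recursion are distinct and at most one can attain the maximum; your prefix-counting variant is a pleasant alternative but not really different in substance.

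Where you diverge from the paper is part (iv), and you are right to flag it. The paper's proof of (iv) asserts that for $n=3$ the recursion expresses $r_{q,3,w}$ as ``the sum of $q$ values, all of which are at most $q$ (apart from at most one value equal to $q+2$)'', and concludes $r_{q,3,w}\le (q-1)q+(q+2)=q^2+2$. This overlooks that for even $q$ the extra summand $r_{q,2,w-q/2}$ in Lemma~\ref{lemma:rqns}(v) coincides with the $i=q/2$ term of the main sum, so when $w=3q/2$ the value $q+2$ appears \emph{twice}, not once. Your counterexample $q=4$, $n=3$, $w=6$ is decisive: by part (vi), $r_{4,3,6}=\binom{6}{3}=20$, whereas $q^{n-3}(q^2+2)=18$. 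Thus the stated bound in (iv) is false at $n=3$, and the paper's base case is in error. The fix is immediate from your analysis (or from correcting the paper's count): the base case gives $r_{q,3,w}\le (q-2)q+2(q+2)=q^2+4$, and the induction step still multiplies by $q$, so the correct inequality is $r_{q,n,w}\le q^{n-3}(q^2+4)$. This is harmless for the downstream applications (Note~\ref{note:rqns} only needs $r_{q,n,nq/2}=o(q^{n-1})$), but your diagnosis of (iv) is accurate and the paper's proof is not.
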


\begin{proof}
\begin{itemize}

\item[(i)] If $q$ is odd, then, by Lemma~\ref{lemma:rqns}(v),
    \[ r_{q,2,q} = \sum_{i=1}^{q-1}r_{q,1,q-i} + r_{q,1,q-q/2} = (q-1)+1, \mbox{(by Lemma~\ref{lemma:rqns}(ii))}. \]
    If $w\neq q$, then, by the same result, $r_{q,2,w}$ will be the sum of $q$ terms at most
$q-1$ of which are non-zero, all equal to 1, i.e.\ $r_{q,2,w}\leq q-1$.
\item[(ii)] If $q$ is even then, by Lemma~\ref{lemma:rqns}(v),
    \[ r_{q,2,q} = \sum_{i=1}^{q-1}r_{q,1,i}+r_{q,1,q/2} = q + 2, \mbox{(by Lemma~\ref{lemma:rqns}(i))}. \]
If $w\neq q$, then, by the same result, $r_{q,2,w}$ is equal to the sum of $q$ terms at most
$q-1$ of  which are non-zero, with at most one equal to $2$ and all others no greater than $2$
($r_{q,n-1,w-q/2}$  is at most one since $w\neq q$).  Hence $r_{q,2,w}\leq (q-2)+2=q$.
\item[(iii)] This follows by induction on $n$. If $n=3$ then, by Lemma~\ref{lemma:rqns}(v),
    $r_{q,3,w}$ is the sum of $q$ terms, all of which are less than $q$ except possibly at most
    one  equal to $q$ --- from (i).  Hence, since $q>2$, $r_{q,3,w}\leq (q-1)^2+q<q^2$,
    and the claim holds. The induction step is trivial since $r_{q,m+1,w}$ is equal to the sum
    of $q$ terms $r_{q,m,i}$, each of which is less than $q^{m-1}$.
\item[(iv)] This also follows by induction on $n$. If $n=3$ then, by Lemma~\ref{lemma:rqns}(v),
    $r_{q,3,w}$ is the sum of $q$ terms, all of which are at most $q$ except possibly at most one
     equal to $q+2$) --- from (ii).  Hence, $r_{q,3,w}\leq (q-1)q+(q+2)=q^2+2$, and the
    claim holds. The induction step is trivial since $r_{q,m+1,w}$ is equal to the sum of $q$
    terms $r_{q,m,i}$, each of which is at most $q^{m-3}(q^2+2)$.

\item[(v)] When $q=3$ the recursion of Lemma~\ref{lemma:rqns}(v) is $r_{3,n,s} =
    r_{3,n-1,s-2}+r_{3,n-1,s-3/2}+r_{3,n-1,s-1}$ and by Lemma~\ref{lemma:rqns}(ii) we have
    $r_{3,1,1}=r_{3,1,3/2}=r_{3,1,2}=1$. It follows that $r_{3,n,n+i/2}$ is the coefficient of
    $x^i$ in the expansion of $(1+x+x^2)^n$, i.e.\ the trinomial coefficient $N_2(i,n)$.
\item[(vi)] When $q=4$ the recursion of Lemma~\ref{lemma:rqns}(v) is $r_{4,n,s} =
    r_{4,n-1,s-3}+2r_{4,n-1,s-2}+r_{3,n-1,s-1}$ and by Lemma~\ref{lemma:rqns}(i) we have
    $r_{4,1,1}=1,r_{4,1,2}=2,r_{4,1,3}=1$. It follows that $r_{4,n,w}$ is the coefficient of
    $x^{w-n}$ in the expansion of $(1+2x+x^2)^n=(1+x)^{2n}$, i.e.\ the binomial coefficient
    $\binom{2n}{w-n}$. \qed
\end{itemize}
\end{proof}

\begin{note}  \label{note:rqns}
Lemma~\ref{lemma:NOS-generaln} when combined with Corollary~\ref{corollary:rqns}(i) implies that,
in the case $n=2$, the $\mathcal{NOS}_q(2)$ generated using the approach described in
Section~\ref{section:NOS-generaln} has optimal period.

Also, it follows from Corollary~\ref{corollary:rqns}(iii) and (iv) that a $\mathcal{NOS}_q(n)$
generated using the approach described in Section~\ref{section:NOS-generaln} has, by Lemma~\ref{lemma:NOS-generaln}, period at least $(q^n-q^{n-3}(q^2+2))/2=\frac{q^3-q^2-2}{2q^3} q^n$. Since $\frac{q^3-q^2-2}{q^3}$ approaches $1$ as $q\rightarrow \infty$, these negative orientable sequences have period  the same order of magnitude as the upper bound.  Of
course, if $n>2$ the period is unlikely to actually be optimal, as the construction avoids all
$n$-tuples of pseudoweight exactly $nq/2$.
\end{note}
The values of $r_{q,n,nq/2}$ for $q=3$, $q=4$ and $1\leq n\leq 6$ are given in
Table~\ref{table_r_q34}.  These values were derived using Corollary~\ref{corollary:rqns}(v) and
(vi).

\begin{table}[htb]
\caption{Values of $r_{q,n,nq/2}$ for small $q$ and $n$} \label{table_r_q34}
\begin{center}
\begin{tabular}{crrrrrrr} \hline
 ~    & $n=2$ & $n=3$ & $n=4$ & $n=5$ & $n=6$ & $n=7$ & $n=8$ \\ \hline
$q=3$ &    3  &     7 &    19 &    51 &   141 &    393 &    1107 \\
$q=4$ &    6  &    20 &    70 &   252 &   924 &   3432 &   12870 \\
$q=5$ &    5 &     13 &    69 &   261 &  1301 &   5685 &   27525 \\
$q=6$ &    8 &     38 &   196 &  1052 &  5774 &  32146 &  180772 \\
$q=7$ &    7 &     19 &   183 &   791 &  6613 &  35561 &  267639 \\
$q=8$ &   10 &     62 &   426 &  3052 & 22354 & 166014 & 1245066 \\
  \hline
\end{tabular}
\end{center}
\end{table}

The periods of the negative orientable sequences using Construction II for $q=3$, $q=4$ and $1\leq
n\leq 6$ are given in Table~\ref{table_NOS_periods_II_q34}.  These values were derived using
Lemma~\ref{lemma:NOS-generaln} and Table~\ref{table_r_q34}.

\begin{table}[htb]
\caption{$\mathcal{NOS}_q(n)$ periods (Construction II) for small $q$ and $n$}
\label{table_NOS_periods_II_q34}
\begin{center}
\begin{tabular}{crrrrrrr} \hline
 ~    & $n=2$ & $n=3$ & $n=4$ & $n=5$ & $n=6$ & $n=7$ & $n=8$ \\ \hline
$q=3$ &    3  &    10 &    31 &    96 &   294 &    897 &    2727 \\
$q=4$ &    5  &    22 &    93 &   386 &  1586 &   6476 &   26333 \\
$q=5$ &   10 &     56 &   278 &  1432 &  7162 &  36220 &  181550 \\
$q=6$ &   14 &     89 &   550 &  3362 & 20441 & 123895 &  749422 \\
$q=7$ &   21 &    162 &  1109 &  8008 & 55518 & 393991 & 2748581 \\
$q=8$ &   27 &    225 &  1835 & 14858 &119895 & 965569 & 7766075 \\
\hline
\end{tabular}
\end{center}
\end{table}

These values can be compared with the bounds given in Table~\ref{table_bounds}.

\subsection{Examples of Construction II}  \label{section:NOS-examples}

We next give some simple examples of the general $n$ construction.

\subsubsection{A negative orientable sequence for \texorpdfstring{$n=2$}{n=2}
and \texorpdfstring{$q=3$}{q=3}}

The nine 3-ary 2-tuples and their pseudoweights are shown in
Table~\ref{table:tuples-n2q3}.

\begin{table}[htb]
\caption{Pseudoweights of $q$-ary $n$-tuples: $n=2$, $q=3$}
\label{table:tuples-n2q3}
\begin{center}
\begin{tabular}{|l|c||l|c||l|c|}
\hline
$\mathbf{u}$ & $w^*(\mathbf{u})$ & $\mathbf{u}$ & $w^*(\mathbf{u})$ & $\mathbf{u}$ & $w^*(\mathbf{u})$  \\ \hline
11 & 2   & 00 & 3   & 02 & 3.5   \\ \hline
01 & 2.5 & 12 & 3   & 20 & 3.5   \\ \hline
10 & 2.5 & 21 & 3   & 22 & 4     \\ \hline
\end{tabular}
\end{center}
\end{table}

There are clearly just three 2-tuples of pseudoweight less than $nq/2=3$, and
these can be joined to form the ring sequence
\[ S_{3,2}=[110]. \]
This is an optimal period $\mathcal{NOS}_3(2)$ (see Table~\ref{table_bounds}),
with $w_3(S_{3,2})=2$ (a unit in $\mathbb{Z}_3)$).

\subsubsection{A negative orientable sequence for \texorpdfstring{$n=3$}{n=3}
and \texorpdfstring{$q=3$}{q=3}}

The 27 3-ary 3-tuples and their pseudoweights are shown in
Table~\ref{table:tuples-n3q3}.

\begin{table}[htb]
\caption{Pseudoweights of $q$-ary $n$-tuples: $n=3$, $q=3$}
\label{table:tuples-n3q3}
\begin{center}
\begin{tabular}{|l|c||l|c||l|c|}
\hline
$\mathbf{u}$ & $w^*(\mathbf{u})$ & $\mathbf{u}$ & $w^*(\mathbf{u})$ & $\mathbf{u}$ & $w^*(\mathbf{u})$  \\ \hline
111 & 3   & 211 & 4   & 020 & 5     \\ \hline
011 & 3.5 & 000 & 4.5 & 200 & 5     \\ \hline
101 & 3.5 & 012 & 4.5 & 122 & 5     \\ \hline
110 & 3.5 & 021 & 4.5 & 212 & 5     \\ \hline
001 & 4   & 102 & 4.5 & 221 & 5     \\ \hline
010 & 4   & 120 & 4.5 & 022 & 5.5   \\ \hline
100 & 4   & 201 & 4.5 & 202 & 5.5   \\ \hline
112 & 4   & 210 & 4.5 & 220 & 5.5   \\ \hline
121 & 4   & 002 & 5   & 222 & 6     \\ \hline
\end{tabular}
\end{center}
\end{table}

There are ten 3-tuples of pseudoweight less than $nq/2=4.5$, and these can be
joined to form the ring sequence
\[ S_{3,3} = [0100112111]. \]
This is a $\mathcal{NOS}_3(3)$ with period one less than the bound of
Theorem~\ref{theorem:NOS-bounds} (see Table~\ref{table_bounds}) with
$w_3(S_{3,3})=2$ (a unit in $\mathbb{Z}_3)$.

\subsubsection{A negative orientable sequence for \texorpdfstring{$n=3$}{n=3}
and \texorpdfstring{$q=4$}{q=4}}

The 64 4-ary 3-tuples and their pseudoweights are shown in
Table~\ref{table:tuples-n3q4}.

\begin{table}[htb]
\caption{Pseudoweights of $q$-ary $n$-tuples: $n=3$, $q=4$}
\label{table:tuples-n3q4}
\begin{center}
\begin{tabular}{|l|c||l|c||l|c||l|c|}
\hline
$\mathbf{u}$ & $w^*(\mathbf{u})$ & $\mathbf{u}$ & $w^*(\mathbf{u})$ & $\mathbf{u}$ & $w^*(\mathbf{u})$ & $\mathbf{u}$ & $w^*(\mathbf{u})$  \\ \hline
 111 & 3   & 113 & 5   & 022 & 6   & 230 & 7     \\ \hline
 011 & 4   & 131 & 5   & 202 & 6   & 302 & 7     \\ \hline
 101 & 4   & 311 & 5   & 220 & 6   & 320 & 7     \\ \hline
 110 & 4   & 122 & 5   & 123 & 6   & 133 & 7     \\ \hline

 112 & 4   & 212 & 5   & 132 & 6   & 313 & 7     \\ \hline
 121 & 4   & 221 & 5   & 213 & 6   & 331 & 7   \\ \hline
 211 & 4   & 000 & 6   & 231 & 6   & 223 & 7   \\ \hline
 001 & 5   & 002 & 6   & 312 & 6   & 232 & 7   \\ \hline

 010 & 5   & 020 & 6   & 321 & 6   & 322 & 7    \\ \hline
 100 & 5   & 200 & 6   & 222 & 6   & 033 & 8     \\ \hline
 012 & 5   & 013 & 6   & 003 & 7   & 303 & 8     \\ \hline
 021 & 5   & 031 & 6   & 030 & 7   & 330 & 8     \\ \hline

 102 & 5   & 103 & 6   & 300 & 7   & 233 & 8     \\ \hline
 120 & 5   & 130 & 6   & 023 & 7   & 323 & 8     \\ \hline
 201 & 5   & 301 & 6   & 032 & 7   & 332 & 8     \\ \hline
 210 & 5   & 310 & 6   & 203 & 7   & 333 & 9     \\ \hline
\end{tabular}
\end{center}
\end{table}

There are 22 3-tuples of pseudoweight less than $nq/2=6$, and these can be
joined to form the ring sequence
\[ S_{4,3}=[0122~1201~0210~0113~1112~11]. \]
This is a $\mathcal{NOS}_4(3)$ with period five less than the bound of
Theorem~\ref{theorem:NOS-bounds} (see Table~\ref{table_bounds}) with
$w_4(S_{4,3})=0$.

\subsection{Construction III: A construction avoiding zeros}  \label{section:NOS-construction-3}

We now give a further method of construction for negative orientable sequences. This approach
yields sequences with a somewhat shorter period than those generated by Construction II. However,
as discussed in Section~\ref{section:goodOS}, the sequences have a special property that makes them
suitable for use with a recursive method of construction for orientable sequences described in
\cite{Alhakim24a}.

We first need the following definition.

\begin{definition}
If $\mathbf{u}=(u_0,u_1,\ldots,u_{n-1})$ is an $n$-tuple of elements of $\mathbb{Z}_q$ ($q>1$),
then the \emph{weight} of $\mathbf{u}$ is defined to be the sum
\[ w(\mathbf{u}) = \sum_{i=0}^{n-1}u_i \]
where we compute the sum in $\mathbb{Z}$ and treat the values $u_i$ as integers in the range
$[0,q-1]$.
\end{definition}

We can now state the following elementary result.

\begin{theorem} \label{theorem:zero-free}
Suppose $n\geq 2$, $q>2$, and $S$ is a $q$-ary $n$-window sequence containing no zeros and with the
property that all the $n$-tuples appearing in $S$ have weight less than $nq/2$.  Then $S$ is a
$\mathcal{NOS}_q(n)$.
\end{theorem}

\begin{proof}
If $\mathbf{u}$ is an $n$-tuple in $S$ then $w(-\mathbf{u}^R)=nq-w(\mathbf{u})$ since $\mathbf{u}$
is zero-free. Hence, since $w(\mathbf{u})<nq/2$, it follows immediately that
$w(-\mathbf{u}^R)>nq/2$.  Hence the $n$-tuples in $-S^R$ are all distinct from the $n$-tuples in
$S$. Moreover, the $n$-tuples in $S$ (and in $-S^R$) are all distinct because $S$ is an $n$-window
sequence. Hence $S$ is a $\mathcal{NOS}_q(n)$. \qed
\end{proof}

It remains to establish how to construct sequences $S$ with the properties of
Theorem~\ref{theorem:zero-free}. Suppose, for $q>2$ and $n\geq2$, $G$ is a directed graph with
vertices the zero-free $q$-ary $(n-1)$-tuples with weight less than $nq/2-1$, and with a directed
edge connecting vertices $\mathbf{u}=(u_0,u_1,\ldots,u_{n-2})$ and
$\mathbf{v}=(v_0,v_1,\ldots,v_{n-2})$ if and only if $u_i=v_{i-1}$, $(1\leq i\leq n-2)$, and
$w(\mathbf{u})+v_{n-2}<nq/2$.

As previously, we identify the edge connecting $(u_0,u_1,\ldots,u_{n-2})$ to
$(u_1,u_2,\ldots,u_{n-2},v)$ with the $n$-tuple $(u_0,u_1,\ldots,u_{n-2},v)$. It follows
immediately that every zero-free $q$-ary $n$-tuple with weight less than $nq/2$ will appear as an
edge in $G$.

A directed circuit in $G$ will clearly give us a sequence $S$ with the desired property. Consider
any vertex $\mathbf{u}=(u_0,u_1,\ldots,u_{n-2})$. An incoming edge
\[ (s,u_0,u_1,\ldots,u_{n-2}) \]
must satisfy $s+w(\mathbf{u})<nq/2$. Similarly an outgoing edge
\[(u_0,u_1,\ldots,u_{n-2},t)\]
must satisfy $t+w(\mathbf{u})<nq/2$.  That is, the in-degree of every vertex is the same as its
out-degree. Moreover, $G$ is clearly connected, as it is straightforward to construct a `low
weight' directed path between any pair of vertices (just as in the discussion in
Section~\ref{section:NOS-generaln}). This means there exists a directed Eulerian Circuit in $G$
(see, for example, \cite[Corollary 6.1]{Gibbons85}), yielding a $q$-ary zero-free $n$-window
sequence $S$ with the property that all the $n$-tuples appearing in $S$ have weight less than
$nq/2$, and with period equal to the number of zero-free $q$-ary $n$-tuples with weight less than
$nq/2$.

If $nq$ is odd (i.e.\ if $n$ and $q$ are both odd), then the number of zero-free $n$-tuples with
weight less than $nq/2$ is simply $(q-1)^n/2$ (since for every zero-free $n$-tuple of weight less
than $nq/2$ there is one of weight greater than $nq/2$). If $nq$ is even then the number of
$n$-tuples is $((q-1)^n-k_{q,n,nq/2})/2$, where $k_{q,n,w}$ is the number of zero-free $q$-ary
$n$-tuples with weight exactly $w$.

The above discussion establishes the following result.

\begin{lemma}  \label{lemma:NOS-generaln-zerofree}
There exists a $\mathcal{NOS}_q(n)$ containing no zeros of period $(q-1)^n/2$ (if $q$ and $n$ are both odd) or
$((q-1)^n-k_{q,n,nq/2})/2$ (if $q$ or $n$ is even) for all $q>2$ and $n\geq 2$.
\end{lemma}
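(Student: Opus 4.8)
The plan is to read the lemma off directly from the graph construction already assembled above, so that the only genuine work remaining is the period count. Concretely, I would take the directed graph $G$ whose vertices are the zero-free $(n-1)$-tuples of weight less than $nq/2-1$ and whose edges are exactly the zero-free $n$-tuples of weight less than $nq/2$, argue that $G$ admits a directed Eulerian circuit, and then let the ring sequence of that circuit be the candidate $\mathcal{NOS}_q(n)$. Theorem~\ref{theorem:zero-free} converts the combinatorial properties of this circuit into the negative orientability we want, so the whole lemma reduces to (a) producing the circuit and (b) counting its edges.

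For the circuit I would first check that $G$ is balanced. Fixing a vertex $\mathbf{u}$, both its in-degree and its out-degree equal the number of nonzero symbols $t\in\mathbb{Z}_q$ with $t+w(\mathbf{u})<nq/2$, since prepending and appending such a symbol are governed by the identical inequality; the choice of the vertex-weight bound $nq/2-1$ (rather than $nq/2$) guarantees that the opposite endpoint of each such edge is itself a legitimate vertex, as its weight is at most the edge weight minus $1$. Hence in-degree equals out-degree everywhere. I would then verify connectivity by routing any two vertices through the all-ones vertex $(1,1,\ldots,1)$: successively shifting a $1$ into the tuple only decreases the weight, so every intermediate tuple remains zero-free and within the weight bound, giving a directed path from any $\mathbf{u}$ to $(1,\ldots,1)$ and, symmetrically, back out to any $\mathbf{v}$. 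Being connected and balanced, $G$ has an Eulerian circuit by Euler's theorem, and traversing it visits each zero-free $n$-tuple of weight less than $nq/2$ exactly once; by Theorem~\ref{theorem:zero-free} the resulting $n$-window sequence is a $\mathcal{NOS}_q(n)$ whose period is the number of edges of $G$.

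It then remains to count the zero-free $q$-ary $n$-tuples of weight less than $nq/2$. Here I would use that, for zero-free $\mathbf{u}$, one has $w(-\mathbf{u})=nq-w(\mathbf{u})$, so negation $\mathbf{u}\mapsto-\mathbf{u}$ is a weight-reversing bijection on zero-free tuples carrying the class of weight less than $nq/2$ onto the class of weight greater than $nq/2$. When $nq$ is odd (that is, $n$ and $q$ both odd) no zero-free tuple can have weight exactly $nq/2$, since weights are integers, so these two classes partition all $(q-1)^n$ zero-free tuples and the count is exactly $(q-1)^n/2$. When $nq$ is even I would first set aside the $k_{q,n,nq/2}$ tuples of weight exactly $nq/2$ and split the remainder evenly, giving $((q-1)^n-k_{q,n,nq/2})/2$.

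The main obstacle I anticipate is not the count but the bookkeeping in the balancing step: one must be certain that every edge incident to a vertex genuinely joins two \emph{valid} vertices of $G$, which is precisely what the asymmetric bounds — weight less than $nq/2-1$ on vertices versus weight less than $nq/2$ on edges — are engineered to ensure. Once that is pinned down, everything else is a direct application of Euler's theorem together with Theorem~\ref{theorem:zero-free}.
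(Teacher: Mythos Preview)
Your proposal is correct and follows essentially the same route as the paper: the discussion preceding the lemma builds exactly this graph $G$, checks balance via the identical in/out inequality, verifies connectivity by routing through $(1,\ldots,1)$, invokes Euler's theorem, and then counts edges by the same pairing $\mathbf{u}\leftrightarrow-\mathbf{u}$ (the paper phrases this last step more tersely, but it is the same bijection). Your attention to the vertex-weight bound $nq/2-1$ versus the edge-weight bound $nq/2$ matches the paper's setup precisely.
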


We show, in the next subsection, that for $q>2$, $k_{q,n,nq/2}<(q-1)^{n-1}$.   Thus a sequence of $\mathcal{NOS}_q(n)$ exists with period at least $\frac{q-2}{q-1} (q-1)^n/2$. Although this period is not of the same order of magnitude as the upper bound of Theorem~\ref{theorem:NOS-bounds} we note that the sequences constructed in this section are negative orientable sequences of order $n$ over the alphabet $ \mathbb Z_q \backslash \{0\}$  of size $q-1$ and the period of such sequences is bounded above by $(q-1)^n/2$.  Now $\frac{q-2}{q-1}\ge 1/2$ and approaches $1$ as $q\rightarrow\infty$ so the period is the same order of magnitude as this upper bound. Indeed when $nq$ is odd this upper bound is attained.

As we discuss below, it is of interest to know the weight of the $\mathcal{NOS}_q(n)$ constructed
by the above method. We have the following elementary result.

\begin{lemma}  \label{lemma:NOS-generaln-weight}
Suppose $S$ is a $\mathcal{NOS}_q(n)$ constructed according to Theorem~\ref{theorem:zero-free}
using a sequence $S$ containing all zero-free $q$-ary $n$-tuples with weight less than $nq/2$.
Then:
\[ w(S) = \frac{1}{n}\sum_{w=n}^{\lfloor nq/2 \rfloor}wk_{q,n,w}. \]
\end{lemma}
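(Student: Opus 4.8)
The plan is to prove the identity by a double-counting argument that relates the weight $w(S)$ of the whole sequence to the combined weight of all the window $n$-tuples it contains. Write $p$ for the period of $S$, so that $w(S)=\sum_{i=0}^{p-1}s_i$, where each $s_i$ is treated as an integer in $[1,q-1]$ (recall $S$ is zero-free). The key observation is that, when we sum the weights of the $p$ consecutive window tuples $\mathbf{s}_n(i)$, each symbol $s_j$ of $S$ is counted exactly $n$ times --- once for each of the $n$ positions it can occupy within a length-$n$ window.

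First I would make this precise. Since $w(\mathbf{s}_n(i))=\sum_{k=0}^{n-1}s_{i+k}$, interchanging the order of summation gives
\[ \sum_{i=0}^{p-1}w(\mathbf{s}_n(i)) = \sum_{k=0}^{n-1}\sum_{i=0}^{p-1}s_{i+k} = \sum_{k=0}^{n-1}w(S) = n\,w(S), \]
where the middle equality uses periodicity: for each fixed shift $k$, as $i$ runs over $0,\ldots,p-1$ the indices $i+k$ run over a complete residue system modulo $p$, so $\sum_{i=0}^{p-1}s_{i+k}=\sum_{i=0}^{p-1}s_i=w(S)$. Next I would evaluate the left-hand side combinatorially. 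Because $S$ is an $n$-window sequence the tuples $\mathbf{s}_n(0),\ldots,\mathbf{s}_n(p-1)$ are pairwise distinct, and by hypothesis (the Eulerian-circuit construction underlying Theorem~\ref{theorem:zero-free}) they are precisely the zero-free $q$-ary $n$-tuples of weight less than $nq/2$. Grouping these tuples by their weight $w$, and recalling that exactly $k_{q,n,w}$ of them have weight $w$ while a zero-free $n$-tuple has weight at least $n$, yields
\[ \sum_{i=0}^{p-1}w(\mathbf{s}_n(i)) = \sum_{w} w\,k_{q,n,w}, \]
the sum running over all integers $w$ with $n\le w<nq/2$. Combining the two displays and dividing by $n$ gives the stated formula.

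The one place needing care --- and effectively the only obstacle --- is the upper limit of this final sum. The tuples actually occurring in $S$ are exactly those of weight \emph{strictly} less than $nq/2$, so the top contributing weight is the largest integer below $nq/2$. When $nq$ is odd this is $\lfloor nq/2\rfloor=(nq-1)/2$, agreeing with the stated bound; when $nq$ is even, however, $\lfloor nq/2\rfloor=nq/2$ is itself an achievable weight, and the weight-$(nq/2)$ tuples do \emph{not} appear in $S$, so the summation must stop at $nq/2-1$. I would therefore split the argument according to the parity of $nq$ and verify that the upper index is interpreted as ``the greatest integer less than $nq/2$'' throughout, which coincides with $\lfloor nq/2\rfloor$ in the odd case and with $nq/2-1$ in the even case; checking a small even example such as $n=2$, $q=4$ (where $S=[112]$ has $w(S)=4$) confirms that the boundary term $k_{q,n,nq/2}$ must indeed be excluded.
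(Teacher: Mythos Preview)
Your argument is exactly the paper's: sum the weights of all $n$-tuples in one period, observe this equals $n\,w(S)$ by periodicity, and then rewrite the tuple-weight sum by grouping on weight. The paper's proof is a two-sentence sketch of precisely this double count; your version simply spells out the index manipulation.

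Your remark about the upper summation limit is well taken and in fact sharper than the paper's own treatment. The paper writes the upper index as $\lfloor nq/2\rfloor$ and its proof does not discuss the boundary, but as your $n=2$, $q=4$ check shows, when $nq$ is even the term $w=nq/2$ must be excluded (the weight-$nq/2$ tuples are not in $S$), so the correct upper limit is the greatest integer strictly below $nq/2$; this equals $\lfloor nq/2\rfloor$ only when $nq$ is odd. The paper's formula should be read with that convention (or amended to $\lceil nq/2\rceil-1$), and your case split handles it properly.
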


\begin{proof}
If we add the weights of all the $n$-tuples occurring in the ring sequence $S$ then the result will
clearly be $nw(S)$. Since $S$ contains all possible zero-free $q$-ary $n$-tuples of weight less
than $nq/2$, the result follows. \qed
\end{proof}
The values of $k_{q,n,nq/2}$ for $q=3$, $q=4$ and $1\leq n\leq 6$ are given in
Table~\ref{table_k_q34} (observing that $k_{q,n,nq/2}$ is undefined for $q$ and $n$ both odd).
These values were derived using Lemma~\ref{lemma:knqw}.

\begin{table}[htb]
\caption{Values of $k_{q,n,nq/2}$ for $q=3$ and $q=4$} \label{table_k_q34}
\begin{center}
\begin{tabular}{crrrrr} \hline
 ~    & $n=2$ & $n=3$ & $n=4$ & $n=5$ & $n=6$ \\ \hline
$q=3$ &    2  &    -- &     6 &    -- &    20 \\
$q=4$ &    3  &     7 &    19 &    51 &   141 \\  \hline
\end{tabular}
\end{center}
\end{table}

The periods of the negative orientable sequences using Construction III for $q=3$, $q=4$ and $1\leq
n\leq 6$ are given in Table~\ref{table_NOS_periods_III_q34}.  These values were derived using
Lemma~\ref{lemma:NOS-generaln-zerofree} and Table~\ref{table_k_q34}.

\begin{table}[htb]
\caption{$\mathcal{NOS}_q(n)$ periods (Construction III) for $q=3$ and $q=4$}
\label{table_NOS_periods_III_q34}
\begin{center}
\begin{tabular}{crrrrr} \hline
 ~    & $n=2$ & $n=3$ & $n=4$ & $n=5$ & $n=6$ \\ \hline
$q=3$ &    1  &    4  &    5  &    16 &   22 \\
$q=4$ &    3  &    10 &    31 &    96 &  294 \\  \hline
\end{tabular}
\end{center}
\end{table}

\subsection{Enumerating \texorpdfstring{$n$}{n}-tuples of given weight}
\label{section:enumerating_weight}

Lemmas~\ref{lemma:NOS-generaln-zerofree} and \ref{lemma:NOS-generaln-weight} mean that it is of
interest to know the value of $k_{q,n,w}$.  We have the following elementary result.

\begin{lemma}  \label{lemma:knqw}
As above, suppose $k_{q,n,w}$ ($q\geq2$, $n\geq 1$) is the number of zero-free $q$-ary $n$-tuples
with weight exactly $w$, where $k_{q,n,w}=0$ by definition if $w<n$ or $w>n(q-1)$.
\begin{enumerate}
\item[(i)] $k_{q,1,i}=1$, for $1\leq i\leq q-1$.
\item[(ii)] $\sum_{i=n}^{n(q-1)}k_{q,n,i} = (q-1)^n$;
\item[(iii)] if $n>1$, $k_{q,n,w} = \sum_{i=1}^{q-1}k_{q,n-1,w-i}$.
\item[(iv)] $k_{q,2,i}=q-1-|q-i|$, for $2\leq i\leq 2(q-1)$.
\item[(v)] if $q$ is even, $k_{q,3,3q/2}=\frac{3q^2-6q+4}{4}$.
\end{enumerate}
\end{lemma}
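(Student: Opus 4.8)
The plan is to establish the five parts in sequence, bootstrapping the later parts from the earlier ones. Parts (i) and (ii) are immediate from the definitions. A zero-free $1$-tuple is a single symbol $u_0\in\{1,2,\ldots,q-1\}$ whose weight is $u_0$ itself, so there is exactly one of each weight $i$ in the stated range, giving (i); and since every entry of a zero-free $n$-tuple is independently one of $q-1$ nonzero symbols, summing $k_{q,n,i}$ over all admissible weights simply recounts all $(q-1)^n$ zero-free $n$-tuples, giving (ii).

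For part (iii) I would condition on the last coordinate. Each zero-free $n$-tuple of weight $w$ arises uniquely by appending some symbol $i\in\{1,\ldots,q-1\}$ to a zero-free $(n-1)$-tuple of weight $w-i$; summing over the $q-1$ choices of $i$ yields the recursion. For part (iv) I would specialise (iii) to $n=2$, obtaining $k_{q,2,i}=\sum_{j=1}^{q-1}k_{q,1,i-j}$, and then apply (i): each summand is $1$ precisely when $1\le i-j\le q-1$ and $0$ otherwise. Hence $k_{q,2,i}$ is the number of integers $j$ lying in both $\{1,\ldots,q-1\}$ and $[i-(q-1),\,i-1]$. Intersecting these two intervals gives a count of $i-1$ when $2\le i\le q$ and of $2q-1-i$ when $q\le i\le 2q-2$, which is exactly the tent function $q-1-|q-i|$.

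For part (v) the approach is to apply (iii) with $n=3$, so that $k_{q,3,3q/2}=\sum_{i=1}^{q-1}k_{q,2,\,3q/2-i}$, and then substitute the closed form of (iv). Setting $j=3q/2-i$ turns this into $\sum_{j=q/2+1}^{3q/2-1}\bigl(q-1-|q-j|\bigr)$. The one genuinely computational step is to split this range at $j=q$, where the absolute value changes sign, into two arithmetic progressions ($j-1$ for $q/2+1\le j\le q$ and $2q-1-j$ for $q+1\le j\le 3q/2-1$); summing each and adding gives $(6q^2-12q+8)/8=(3q^2-6q+4)/4$.

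The main obstacle throughout is purely bookkeeping — in particular keeping the endpoints of these two progressions correct so that the single term at $j=q$ is not double-counted — since no part requires any idea beyond the recursion of (iii) together with careful counting. As a sanity check, for $q=4$ the range runs over $j\in\{3,4,5\}$ with summands $2,3,2$, giving $k_{4,3,6}=7=(3\cdot16-24+4)/4$, which one can confirm directly by listing the weight-$6$ triples over $\{1,2,3\}$.
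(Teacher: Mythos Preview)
Your proposal is correct and follows essentially the same approach as the paper: (i)--(ii) are declared trivial, (iii) is obtained by conditioning on the final entry, (iv) is derived from (i) and (iii), and (v) is computed by combining (iii) with (iv) and summing the resulting tent function. You supply more detail than the paper (which for (v) simply writes $k_{q,3,3q/2}=\sum_{i=1}^{q-1}(q-1-|i-q/2|)$ and says ``the result follows''), and your explicit derivation of (iv) over the full range $2\le i\le 2q-2$ is actually needed, since the stated range $1\le i\le q$ in (iv) does not cover all the arguments $3q/2-i$ arising in the sum for (v).
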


\begin{proof}
Parts (i) and (ii) are trivially true; (iii) follows immediately by considering the $q-1$ possible
ways of adding a final entry to a zero-free $(n-1)$-tuple to make a zero-free $n$-tuple of a given
weight. (iv) follows from (i) and (iii). Finally, since
\begin{align*}
k_{q,3,3q/2}&~~=~~\sum_{i=1}^{q-1}k_{q,2,3q/2-i}~~~\mbox{(from (iii))} \\
            &~~=~~\sum_{i=1}^{q-1} q-1-|i-q/2|~~~\mbox{(from (iv))}
\end{align*}
and the result follows. \qed
\end{proof}

\begin{corollary}  \label{corollary:kqnw}
Suppose $q>2$.
\begin{enumerate}
\item[(i)] If  $2\leq w\leq 2q-2$, then $k_{q,2,q}=q-1$ and $k_{q,2,w}<q-1$ if $w\neq
    q$.
\item[(ii)] If  $n\geq 3$ and $n\leq w\leq n(q-1)$, then $k_{q,n,w}<(q-1)^{n-1}$.
\end{enumerate}
\end{corollary}

\begin{proof}
\begin{itemize}

\item[(i)] This follows immediately by Lemma~\ref{lemma:knqw}(iv)).
\item[(ii)] By Lemma~\ref{lemma:knqw}(iii),
    \[ k_{q,3,w} = \sum_{i=1}^{q-1} {q,2,i} <(q-1)^2, \mbox{by (i)}.  \]
    The result now follows by induction using Lemma~\ref{lemma:knqw}(iii).
\end{itemize}
\end{proof}

The recursion of Lemma~\ref{lemma:knqw}(iii) makes calculating $k_{q,n,w}$ simple for small $n$ and
$q$. In fact these values are well-studied. It follows immediately from Lemma~\ref{lemma:knqw}(i)
and (iii) that the values $k_{3,n,w}$, $n\leq w\leq 2n$ are simply the binomial coefficients
$\binom{n}{w-n}$. More generally, using the notation of Freund \cite{Freund56},
$k_{q,n,w}=N_{q-2}(w-n,n)$, where $N_m(r,k)$ denotes the number of ways in which $r$ identical
objects can be distributed in $n$ cells with at most $m$ objects per cell. The values $k_{q,n,w}$
are simply the coefficients of the polynomial $(1+x+x^2+\cdots+x^{q-2})^n$. The values of
$N_m(r,k)$ are sometimes referred to as trinomial and quadrinomial coefficients in the cases $m=2$
and 3, and Comtet \cite[page 78]{Comtet74} tabulates the values of the trinomial and quadrinomial
coefficients for small $r$ and $k$.

\subsection{Examples of Construction III}

\subsubsection{Examples for \texorpdfstring{$q=3$}{q=3}}  \label{section:examples-NOS-zerofree}

For $q=3$ and small $n$, the zero-free low-weight tuples used in the construction of
Section~\ref{section:NOS-construction-3} are shown in Tables~\ref{table:0freetuples-n2q3} and
\ref{table:0freetuples-n3q3}.

\begin{table}[htb]
\caption{Zero-free $q$-ary $n$-tuples of weight less than $qn/2$: $n=2$, $q=3$}
\label{table:0freetuples-n2q3}
\begin{center}
\begin{tabular}{|l|c|}
\hline
$\mathbf{u}$ & $w(\mathbf{u})$  \\ \hline
11 & 2   \\ \hline
\end{tabular}
\end{center}
\end{table}

\begin{table}[htb]
\caption{Zero-free $q$-ary $n$-tuples of weight less than $qn/2$: $n=3$, $q=3$}
\label{table:0freetuples-n3q3}
\begin{center}
\begin{tabular}{|l|c||l|c||l|c||l|c|}
\hline
$\mathbf{u}$ & $w(\mathbf{u})$ & $\mathbf{u}$ & $w(\mathbf{u})$ & $\mathbf{u}$ & $w(\mathbf{u})$ & $\mathbf{u}$ & $w(\mathbf{u})$  \\ \hline
111 & 3   & 112 & 4   & 121 & 4  & 211 & 4 \\ \hline
\end{tabular}
\end{center}
\end{table}

The single 2-tuple in Table~\ref{table:0freetuples-n2q3} and the four 3-tuples in
Table~\ref{table:0freetuples-n3q3} give rise to a $\mathcal{NOS}_3(2)$ of period 1 with ring
sequence [1], and a $\mathcal{NOS}_3(3)$ of period 4 with ring sequence [1112], respectively.

\subsubsection{Examples for \texorpdfstring{$q=4$}{q=4}}

For $q=4$ and small $n$, the zero-free low-weight tuples are shown in
Tables~\ref{table:0freetuples-n2q4} and \ref{table:0freetuples-n3q4}.

\begin{table}[htb]
\caption{Zero-free $q$-ary $n$-tuples of weight less than $qn/2$: $n=2$, $q=4$}
\label{table:0freetuples-n2q4}
\begin{center}
\begin{tabular}{|l|c||l|c||l|c|}
\hline
$\mathbf{u}$ & $w(\mathbf{u})$ & $\mathbf{u}$ & $w(\mathbf{u})$ & $\mathbf{u}$ & $w(\mathbf{u})$  \\ \hline
11 & 2   & 12 & 3   & 21 & 3   \\ \hline
\end{tabular}
\end{center}
\end{table}

\begin{table}[htb]
\caption{Zero-free $q$-ary $n$-tuples of weight less than $qn/2$: $n=3$, $q=4$}
\label{table:0freetuples-n3q4}
\begin{center}
\begin{tabular}{|l|c||l|c||l|c||l|c||l|c|}
\hline
$\mathbf{u}$ & $w(\mathbf{u})$ & $\mathbf{u}$ & $w(\mathbf{u})$ & $\mathbf{u}$ & $w(\mathbf{u})$ & $\mathbf{u}$ & $w(\mathbf{u})$ & $\mathbf{u}$ & $w(\mathbf{u})$  \\ \hline
111 & 3   & 112 & 4   & 121 & 4  & 211 & 4  & 113 & 5   \\ \hline
131 & 5 & 311 & 5   & 122 & 5  & 212 & 5  & 221 & 5   \\ \hline
\end{tabular}
\end{center}
\end{table}

The three 2-tuples in Table~\ref{table:0freetuples-n2q4} and the ten 3-tuples in
Table~\ref{table:0freetuples-n3q4}  give rise to a $\mathcal{NOS}_4(2)$ of period 3 with ring
sequence [112], and a $\mathcal{NOS}_4(3)$ of period 10 with ring sequence [1113112212],
respectively.

\section{Orientable sequences}  \label{section:OS_near_optimal}

We now describe a way of using the negative orientable sequences generated by Construction II to
yield orientable sequences for general $n$ and $q$ with period  of the same order  of magnitude as
the  upper bound.  We first need to introduce the Lempel Homorphism, which is fundamental to
the construction of orientable sequences.

\subsection{The de Bruijn graph and the Lempel Homomorphism}

Following Alhakim et al.\ \cite{Alhakim24a} we also introduce the de Bruijn graph. For positive
integers $n$ and $q$ greater than one, let $\mathbb{Z}_q^n$ be the set of all $q^n$ vectors of
length $n$ with entries from the group $\mathbb{Z}_q$ of residues modulo $q$. A de Bruijn sequence
\emph{of order $n$} with alphabet in $\mathbb{Z}_q$ is a periodic sequence that includes every
possible $n$-tuple precisely once as a subsequence of consecutive symbols in one period of the
sequence.

The order $n$ de Bruijn digraph, $B_n(q)$, is a directed graph with $\mathbb{Z}^n_q$ as its vertex
set and where, for any two vectors $\textbf{x} = (x_1,x_2,\ldots,x_n)$ and $\textbf{y} =
(y_1,y_2,\ldots,y_n), \ (\textbf{x}; \textbf{y})$ is an edge if and only if $y_i = x_{i+1}$ for
every $i$ ($1\leq i< n$). We then say that $\textbf{x}$ is a \emph{predecessor} of $\textbf{y}$ and
$\textbf{y}$ is a \emph{successor} of $\textbf{x}$. Evidently, every vertex has exactly $q$
successors and $q$ predecessors.

A cycle in $B_n(q)$ is a path that starts and ends at the same vertex. It is said to be
\emph{vertex disjoint} if it does not visit any vertex more than once. Two cycles or two paths in
the digraph are vertex-disjoint if they do not have a common vertex.

Following the notation of Lempel~\cite{Lempel70}, a convenient representation of a vertex disjoint
cycle $(\textbf{x}^{(1)}; \dots ; \textbf{x}^{(l)})$ is the \emph{ring sequence} $[x^1, \dots , x^l
]$ of symbols from $\mathbb{Z}_q$ defined such that the $i$th vertex in the cycle starts with the
symbol $x^i$. Clearly a vertex disjoint cycle with ring sequence $[x_1,x_2,\ldots,x_m]$ corresponds
to an $n$-window sequence of period $m$ where $s_i=x_j$ whenever $i\equiv j \pmod m$.


Finally, we need a well-established generalisation of the Lempel graph homomorphism~\cite{Lempel70}
to non-binary alphabets --- see, for example, Alhakim and Akinwande~\cite{Alhakim11}.

\begin{definition} \label{Lempel}
For a nonzero $\beta \in \mathbb{Z}_q$, we define a function $D_{\beta}$ from $B_n(q)$ to
$B_{n-1}(q)$ as follows. For $\textbf{a} = (a_1, \dots , a_n)$ and $\textbf{b} = (b_1, \dots ,
b_{n-1}), \ D_{\beta}(\textbf{a}) = \textbf{b}$ if and only if $b_i = d_{\beta}(a_i , a_{i+1})$ for
$i = 1$ to $n-1$, where $d_{\beta}(a_i , a_{i+1}) = \beta(a_{i+1} - a_i) \mod q.$
\end{definition}

We extend the notation to allow the Lempel morphism $D_{\beta}$ to be applied to periodic sequences
in the natural way, as we now describe. That is, $D_{\beta}$ (where $\beta\in \mathbb{Z}_q$) is the
map from the set of periodic sequences to itself defined by
\[ D((s_i))= \{(t_i): t_j=\beta(s_{j+1}-s_{j}) \}. \]
The image of a sequence of period $m$ will clearly have period dividing $m$. In the usual way we
can define $D_{\beta}^{-1}$ to be the \emph{inverse} of $D_{\beta}$, i.e.\ if $S$ is a periodic
sequence than $D_{\beta}^{-1}(S)$ is the set of all sequences $T$ with the property that
$D_{\beta}(T) = S$.

We are particularly interested in the case $\beta=1$, and we simply write $D$ for $D_1$. Examples of the application of
$D^{-1}$ are given in Sections~\ref{subsection_OS_from_NOS_examples_1} and
\ref{subsection_OS_from_NOS_examples_2}.

The
\emph{weight} $w(S)$ of a cycle $S$ is the weight of the ring sequence corresponding to $S$ (that
is the sum of the terms $s_0,s_1,\dots,s_{m-1}$ treating $s_i$ as an integer in the range
$[0,q-1]$). Similarly we write $w_q(S)$ for $w(S) \bmod{q}$.

\subsection{Construction methods}

We need the following key result from Alhakim et al. \cite{Alhakim24a}.

\begin{result}[Theorem 6.10 of \cite{Alhakim24a}]  \label{result:Lempel-negative-orientable}
Suppose $S=(s_i)$ is a negative orientable sequence of order $n$ and period $m$. If $w_q(S)$ has
additive order $h$ as a residue modulo $q$ then $D^{-1}(S)$ consists of $h$ shifts of each of $q/h$
mutually no-disjoint orientable sequences of order $n+1$ and period $hm$ which are translates of
one another.
\end{result}

So, in particular, if a $\mathcal{NOS}_q(n)$ $S$ has period $m$ and $(w_q(S),q)=1$ (i.e.\ $w_q(S)$
is a unit in $\mathbb{Z}_q$) then $D^{-1}(S)$ is an $\mathcal{OS}_q(n+1)$ of period $qm$. We
propose to employ the negative orientable sequences described in Sections~\ref{section:NOSq2} and
\ref{section:NOS-generaln} to generate orientable sequences using this approach. Unfortunately, we
do not know in general which of the negative orientable sequences have weight a unit in
$\mathbb{Z}_q$; however, regardless of their actual weight, we next describe a very simple way of
modifying these sequences to achieve the desired result.

We first make a simple observation regarding units in $\mathbb{Z}_q$ for any $q$ (where a unit
$u\in \mathbb{Z}_q$ is any value such that $(u,q)=1$).

\begin{lemma} \label{lemma:units}
Suppose $q>2$ ($q\neq6$) and $w\in\mathbb{Z}_q$ is a non-unit, i.e.\ $(w,q)\neq1$.  Then there
exists $d\in\{1,2,\ldots,\lfloor (q-1)/2\rfloor\}$ such that $w-d$ is a unit.
\end{lemma}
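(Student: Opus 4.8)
The plan is to recast the statement as a bound on the length of runs of consecutive non-units, and then to control those runs. Write $k=\lfloor(q-1)/2\rfloor$, and note $k\ge1$ since $q\ge3$. For a fixed non-unit $w$, the conclusion fails exactly when $w-1,w-2,\ldots,w-k$ are all non-units; since $w$ is itself a non-unit by hypothesis, this would exhibit the $k+1$ consecutive residues $w-k,\ldots,w-1,w$ as non-units. Conversely, every run of consecutive non-units terminates at some non-unit, which is a legitimate choice of $w$. Hence the lemma (asserted for all non-units $w$) is equivalent to the statement that every maximal run of consecutive non-units in $\mathbb{Z}_q$ has length at most $k$. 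The whole problem thus reduces to bounding the longest block of consecutive integers each sharing a common factor with $q$.

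First I would reduce to the squarefree case. An integer is a non-unit modulo $q$ precisely when it is divisible by some prime dividing $q$, so the non-unit pattern on $\mathbb{Z}$ is periodic with period $Q=\mathrm{rad}(q)=\prod_{p\mid q}p$; in particular the longest run of consecutive non-units modulo $q$ equals the longest such run modulo $Q$ (these runs are shorter than $Q\le q$). Since $q\ge Q$ we have $k=\lfloor(q-1)/2\rfloor\ge\lfloor(Q-1)/2\rfloor$, so it suffices to bound the run length by $\lfloor(Q-1)/2\rfloor$ for every squarefree $Q>2$ with $Q\ne6$, together with a separate treatment of the $q$ whose radical is $6$.

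Next I would dispose of the structurally easy cases. If $Q$ is prime then the non-units are exactly the multiples of that prime and so are isolated, giving maximal run length $1\le\lfloor(Q-1)/2\rfloor$ for $Q\ge3$. When $Q$ has at least two distinct prime factors I would estimate a run of length $L$ by a covering argument: each prime $p\mid Q$ accounts for at most $\lceil L/p\rceil$ members of the run, and every member is covered, so $L\le\sum_{p\mid Q}\lceil L/p\rceil\le L\sum_{p\mid Q}(1/p)+\omega(Q)$, where $\omega(Q)$ is the number of distinct primes dividing $Q$. This yields $L\le\omega(Q)/(1-\sum_{p\mid Q}1/p)$ whenever $\sum_{p\mid Q}1/p<1$, and since $Q$ grows like the product of its prime factors while $\omega(Q)$ and $\sum1/p$ grow only slowly, this comfortably gives $L\le\lfloor(Q-1)/2\rfloor$ once $Q$ is moderately large.

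The main obstacle is the regime where $\sum_{p\mid Q}1/p\ge1$, i.e.\ when $Q$ is divisible by several of the smallest primes and the covering bound becomes vacuous; there a sharper estimate (in effect a bound on Jacobsthal's function) is required. Fortunately that regime forces $Q$ to be large, so $\lfloor(Q-1)/2\rfloor$ is large while the actual run stays short, and I expect to finish by a direct check of the finitely many small squarefree $Q$ left uncovered by the covering bound. This finite check is exactly where the hypothesis earns its keep: $Q=6$ is the unique squarefree modulus whose longest non-unit run, namely $\{2,3,4\}$ of length $3$, exceeds $\lfloor(Q-1)/2\rfloor=2$, which is precisely why $q=6$ is excluded. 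For the remaining moduli with $\mathrm{rad}(q)=6$ we have $q\ge12$, hence $k\ge5>3$, so those cases hold after all. Assembling these pieces establishes the required run bound and hence the lemma.
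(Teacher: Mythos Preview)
Your reformulation is correct and appealing: the lemma is indeed equivalent to the assertion that no block of $k+1$ consecutive residues modulo $q$ consists entirely of non-units, and the reduction to the squarefree kernel $Q=\mathrm{rad}(q)$ is sound. But the argument has a genuine gap in the regime $\sum_{p\mid Q}1/p\ge1$. You write that there ``a sharper estimate (in effect a bound on Jacobsthal's function) is required'' and then assert that ``the actual run stays short'', proposing to finish by checking ``the finitely many small squarefree $Q$ left uncovered by the covering bound''. The trouble is that there are infinitely many squarefree $Q$ with $\sum_{p\mid Q}1/p\ge1$ --- every squarefree multiple of $30$, for instance --- so the covering inequality alone never reduces the problem to a finite check. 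You correctly identify that a Jacobsthal-type estimate is the missing ingredient, but you do not supply one, and the known bounds (Kanold's $g(n)\le 2^{\omega(n)}$, or Iwaniec's $g(n)\ll(\log n)^2$) are not one-line facts. Without such a bound the argument does not close. (Even where $\sum 1/p<1$, your relaxed inequality $L\le\omega(Q)/(1-\sum 1/p)$ is looser than the raw covering constraint $L\le\sum_{p\mid Q}\lceil L/p\rceil$; for small $Q$ such as $10$ or $14$ it overshoots and forces additional hand checks you have not enumerated.)

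By contrast, the paper's proof is entirely elementary and avoids run-length estimates altogether. For $q>6$ it simply exhibits an explicit unit $u$ in the interval $(q/2,\,3q/4]$ --- taking $u=(q+1)/2$ when $q$ is odd, $u=q/2+1$ when $4\mid q$, and $u=q/2+2$ when $q\equiv2\pmod4$ --- each verified coprime to $q$ by a one-line gcd calculation. With this $u$ in hand, a short case split on the size of $w$ produces the required $d$: subtract down to $1$ (or to $q-1$ when $w=0$) if $w\le q/2$, and subtract down to $u$ or to $q-u$ if $w>q/2$. The cases $q\in\{3,4,5\}$ are immediate. So where your route would need sieve machinery and a nontrivial Jacobsthal bound, the paper gets by with one hand-built unit and a few lines of case analysis.
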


\begin{proof}
If $q$ is prime then this follows immediately since every non-zero element in $\mathbb{Z}_q$ is a
unit. If $q=4$, then $w$ must be 0 or 2 and in both cases $w-1$ is a unit.  Hence suppose $q>6$.

We first show that there exists a unit $u$ satisfying $q/2< u\leq 3q/4$. If $q$
is odd, $q=2s+1$ say ($s\geq3$), then trivially $s+1$ is a unit in
$\mathbb{Z}_{2s+1}$ since $(s+1,2s+1)=1$ for every $s>0$, and $s+1$ is within
the desired range. If $q$ is even, first suppose $q\equiv0\pmod 4$, i.e.\
$q=4s$ for some $s\geq2$.  Then $2s+1$ is a unit in $\mathbb{Z}_{4s}$ since
$(2s+1,4s)=1$ for every $s>0$, and $2s+1$ is within the desired range. Finally
suppose $q\equiv2\pmod 4$, i.e.\ $q=4s+2$ for some $s\geq2$. Then $2s+3$ is a
unit in $\mathbb{Z}_{4s+2}$ since $(2s+3,4s+2)=1$ for every $s>0$, and $2s+3$
is within the desired range since $s\geq 2$.

If $w=0$ then $w-1$ is a unit and if $0<w\leq q/2$ then clearly there exists
$d\in\{1,2,\ldots,\lfloor (q-1)/2\rfloor\}$ such that $w-d=1$, and the result follows. Hence
suppose $q/2<w<q-1$.  Since $q>6$ there exists a unit $u$ satisfying $q/2<u\leq 3q/4$. If $u<w$
then clearly there will exist a $d\in\{1,2,\ldots,\lfloor (q-1)/2\rfloor\}$ such that $w-d=u$. If
$w<u$ then there exists a $d\in\{1,2,\ldots,\lfloor (q-1)/2\rfloor\}$ such that $w-d=-u$; this
follows since $w<u\leq 3q/4$ and $-u\geq q/4$.  The result follows by noting that $-u$ must be a
unit. \qed
\end{proof}

As is well-known, it is possible to remove a uniform $n$-tuple from a sequence
without changing the other $n$-tuples present, simply by deleting a single
occurrence of a symbol from within the uniform $n$-tuple.  That is, if $S$ is
a $\mathcal{NOS}_q(n)$ of period $m$ containing the $i$-uniform $n$-tuple
($i,i,i\ldots,i$), deleting a single occurrence of $i$ from within this tuple
will result in a $\mathcal{NOS}_q(n)$ $S'$ of period $m-1$, where
$w_q(S')=w_q(S)-i$.  This leads to the following simple construction.

\begin{construction}  \label{construction:NOS-unit-weight}
Suppose $q>2$ and let $S$ be a $\mathcal{NOS}_q(n)$ with the property that it contains all the
$i$-uniform $n$-tuples ($i,i,i\ldots,i$) for every $i$ satisfying $1\leq i<q/2$.  We derive a sequence
$S'$ from $S$ in the following way.
\begin{itemize}
\item If $w_q(S)$ is a unit in $\mathbb{Z}_q$ then set $S'=S$.
\item If $w_q(S)$ is not a unit in $\mathbb{Z}_q$ and $q\neq6$, choose $i\in[1,q/2)$ such that
    $w_q(S)-i$ is a unit ($i$ exists by Lemma~\ref{lemma:units}). By assumption, $S$ contains
    the $i$-uniform $n$-tuple ($i,i,\ldots,i$), and let $S'$ be derived from $S$ by deleting a
    single occurrence of $i$ from this $n$-tuple.
\item If $w_q(S)$ is not a unit in $\mathbb{Z}_q$ and $q=6$, then $w_q(S)$ must be one of 0, 2,
    3 and 4. Observe that $S$ must contain the uniform $n$-tuples $(1,1,\ldots,1)$ and
    $(2,2,\ldots,2)$.  If $w_q(S)=0$ or 2, let $S'$ be derived from $S$ by deleting a single
    occurrence of $1$ from the 1-uniform $n$-tuple $(1,1,\ldots,1)$. If $w_q(S)=3$, let $S'$ be
    derived from $S$ by deleting a single occurrence of $2$ from $(2,2,\ldots,2)$. If
    $w_q(S)=4$ let $S'$ be derived from $S$ by deleting a single occurrence of $1$ from
    $(1,1,\ldots,1)$ and a single occurrence of $2$ from $(2,2,\ldots,2)$.
\end{itemize}
\end{construction}

\begin{lemma}  \label{lemma:NOS-unit-weight}
Suppose that $q>2$, $S$ is a $\mathcal{NOS}_q(n)$ of period $m$ containing all the $i$-uniform
$n$-tuples ($i,i,i\ldots,i$) for every $i$ satisfying $1\leq i<q/2$, and $S'$ is derived from $S$
using Construction~\ref{construction:NOS-unit-weight}. Then $S'$ is a $\mathcal{NOS}_q(n)$ with
$w_q(S')$ a unit in $\mathbb{Z}_q$, and period at least $m-1$ ($q\neq6$) or at least $m-2$ ($q=6$).
\end{lemma}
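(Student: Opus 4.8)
The statement claims three things about $S'$: that it is a $\mathcal{NOS}_q(n)$, that $w_q(S')$ is a unit, and that its period is at least $m-1$ (or $m-2$ when $q=6$). The plan is to verify each of these by tracking the effect of each deletion step described in Construction~\ref{construction:NOS-unit-weight}, relying on the preceding observation that deleting a single occurrence of a symbol from within an $i$-uniform $n$-tuple preserves the negative orientable property while reducing both the period and the weight in a controlled way.

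First I would dispose of the structural claim. The remark immediately preceding the construction already establishes the key fact: if $S$ is a $\mathcal{NOS}_q(n)$ containing an $i$-uniform $n$-tuple, deleting one occurrence of $i$ from within it yields a $\mathcal{NOS}_q(n)$ $S'$ of period $m-1$ with $w_q(S')=w_q(S)-i$. In the first bullet of the construction ($w_q(S)$ already a unit) we have $S'=S$, so everything is immediate. In the generic second bullet we delete a single $i$, so $S'$ is a $\mathcal{NOS}_q(n)$ of period exactly $m-1$; by Lemma~\ref{lemma:units} the chosen $i\in[1,q/2)$ makes $w_q(S')=w_q(S)-i$ a unit, settling both the weight and period claims for $q\neq6$. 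One subtlety I would flag: the remark assumes the $i$-uniform tuple is present, which is guaranteed by hypothesis precisely for $1\le i<q/2$, and the $i$ supplied by Lemma~\ref{lemma:units} lies in that range.

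The main work, and the place where care is needed, is the $q=6$ case, since here a single deletion may not suffice and the argument must be done by hand for each residue. I would observe that $\mathbb{Z}_6$ has units $\{1,5\}$ only, so the non-unit values of $w_q(S)$ are $0,2,3,4$, matching the four subcases listed. For $w_q(S)\in\{2,3\}$ a single deletion of a $1$ (giving weight $1$) or a $2$ (giving weight $0\mapsto$ wrong---so here one deletes to reach a unit) must be checked to land on a unit; for $w_q(S)=2$, deleting a $1$ gives weight $1$, a unit; for $w_q(S)=3$, deleting a $2$ gives weight $1$, a unit. The genuinely two-step cases are $w_q(S)=0$ and $w_q(S)=4$: for $w_q(S)=0$, deleting one $1$ gives weight $5$, a unit (a single deletion, period $m-1$); for $w_q(S)=4$, deleting one $1$ and one $2$ gives weight $4-1-2=1$, a unit, at the cost of two deletions, hence period $m-2$. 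I would verify that in every subcase the required uniform tuples---$(1,\ldots,1)$ and $(2,\ldots,2)$, both present since $1,2<q/2=3$---are available, so each deletion is legitimate, and that the two deletions in the $w_q(S)=4$ case act on distinct uniform tuples and therefore commute without interfering.

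The hardest part is simply the bookkeeping in the $q=6$ branch: ensuring that each prescribed deletion both reaches a unit weight modulo $6$ and uses a uniform tuple that is guaranteed present, and confirming that the worst case truly needs two deletions (hence $m-2$) rather than one. The rest is a direct invocation of the preceding remark. I would therefore structure the proof as: (1) cite the deletion remark to get the $\mathcal{NOS}_q(n)$ property and the weight arithmetic for free; (2) handle $q\neq6$ via Lemma~\ref{lemma:units} in one line each for the two bullets; (3) enumerate the four residues for $q=6$, checking unit weight and period loss in each. No step poses a conceptual difficulty; the only risk is an arithmetic slip modulo $6$, which a small table of $w_q(S)$ against the resulting weight would eliminate.
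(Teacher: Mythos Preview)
Your proposal is correct and follows essentially the same approach as the paper: invoke the preceding remark that deleting a symbol from a uniform $n$-tuple preserves negative orientability while decrementing period and weight, then use Lemma~\ref{lemma:units} for $q\neq 6$ and a case check for $q=6$. The paper's own proof is a two-sentence sketch that simply asserts these facts; your version spells out the $q=6$ arithmetic and the availability of the required uniform tuples, which is exactly the verification the paper leaves implicit.
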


\begin{proof}
Observe that the construction of $S'$ in the case $q=6$ ensures that $w_6(S')$ is a unit. The
result follows trivially by the observation that removing a single element from a uniform
$n$-tuple will not affect negative orientability. \qed
\end{proof}

\subsection{Orientable sequences for \texorpdfstring{$n=3$}{n=3}}

We can now give a method for constructing near-optimal orientable sequences for $n=3$ and any
$q>2$. These have period asymptotical of the same order as the maximum possible as $q\
\rightarrow\infty$.

\begin{construction}  \label{construction:OS3}
Suppose $q>2$ and let $S$ be a maximal $\mathcal{NOS}_q(2)$ constructed using the method described
in Section~\ref{section:NOSq2}. Let $S'$ be derived from $S$ using
Construction~\ref{construction:NOS-unit-weight} (observing that $S$ satisfies the conditions of the
construction by Note~\ref{note:constant2}). Then let $T$ be derived from $S'$ using
Theorem~\ref{result:Lempel-negative-orientable}.
\end{construction}

\begin{theorem}  \label{theorem:OS3}
Suppose $q>2$ and suppose that $T$ is constructed from $S$ using
Construction~\ref{construction:OS3}. Then $T$ is an $\mathcal{OS}_q(3)$ of period at least:
\begin{align*}
q\left(\frac{q(q-1)}{2}-1\right) ~~&~\mbox{if $q$ is odd;}\\
q\left(\frac{q(q-1)}{2}-2\right) ~~&~\mbox{if $q$ is even ($q\not=6$);}\\
q\left(\frac{q(q-1)}{2}-3\right) ~~&~\mbox{if $q=6$.}
\end{align*}
\end{theorem}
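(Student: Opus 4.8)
The plan is to assemble the result by chaining together the three stages of Construction~\ref{construction:OS3}, tracking the period through each stage and checking that the hypotheses of each cited result hold. First I would record the period $m$ of the starting sequence $S$: by the constructions of Section~\ref{section:NOSq2} (summarised in Lemma~\ref{lemma:NOSn=2}), a maximal $\mathcal{NOS}_q(2)$ has period $m=q(q-1)/2$ when $q$ is odd and $m=q(q-1)/2-1$ when $q$ is even.

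Next I would pass from $S$ to $S'$ using Construction~\ref{construction:NOS-unit-weight}. The hypothesis of that construction --- that $S$ contains every $i$-uniform $2$-tuple for $1\le i<q/2$ --- holds by Note~\ref{note:constant2}; in particular, when $q=6$ the tuples $(1,1)$ and $(2,2)$ needed by the special case of the construction are both present. Lemma~\ref{lemma:NOS-unit-weight} then guarantees that $S'$ is a $\mathcal{NOS}_q(2)$ with $w_q(S')$ a unit in $\mathbb{Z}_q$, of period at least $m-1$ (for $q\neq6$) or at least $m-2$ (for $q=6$). Substituting the values of $m$ yields, as a lower bound on the period of $S'$, the quantity $q(q-1)/2-1$ when $q$ is odd, $q(q-1)/2-2$ when $q$ is even and $q\neq6$, and $q(q-1)/2-3$ when $q=6$.

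Finally I would apply the inverse Lempel homomorphism via Result~\ref{result:Lempel-negative-orientable}. The crucial point is that, since $w_q(S')$ is a unit, it generates $\mathbb{Z}_q$ under addition and so has additive order $h=q$. Result~\ref{result:Lempel-negative-orientable} then states that $D^{-1}(S')$ consists of $q/h=1$ orientable sequence of order $3$ (up to shift), of period $h$ times the period of $S'$, that is $q$ times the period of $S'$. Multiplying the three period bounds for $S'$ by $q$ gives precisely the three claimed lower bounds for the period of $T$, completing the argument.

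There is no genuine obstacle here: the statement is essentially a bookkeeping assembly of earlier results. The only points demanding care are (a) computing the additive order of a unit correctly as $h=q$, so that $D^{-1}(S')$ produces a single long orientable sequence rather than several shorter translates; and (b) keeping the three parity-and-size cases aligned, with $q=6$ handled separately because Construction~\ref{construction:NOS-unit-weight} may delete two symbols there rather than one.
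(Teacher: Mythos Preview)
Your proposal is correct and follows essentially the same approach as the paper's own proof: record the period of the maximal $\mathcal{NOS}_q(2)$ from Section~\ref{section:NOSq2}, apply Lemma~\ref{lemma:NOS-unit-weight} to bound the period of $S'$ in the three cases, and then multiply by $q$ via Result~\ref{result:Lempel-negative-orientable} since $w_q(S')$ is a unit. Your write-up is in fact slightly more careful than the paper's in spelling out why the additive order equals $q$ and in noting that the $q=6$ case needs the $(1,1)$ and $(2,2)$ tuples specifically.
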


\begin{proof}
By the discussions in Section~\ref{section:NOSq2}, $S$ has period either $q(q-1)/2$ if $q$ is odd
or $q(q-1)/2-1$ if $q$ is even. Hence, by Lemma~\ref{lemma:NOS-unit-weight} $S'$ has period at
least:
\begin{itemize}
\item $q(q-1)/2-1$ if $q$ is odd;
\item $q(q-1)/2-2$ if $q$ is even;
\item $q(q-1)/2-3$ if $q=6$.
\end{itemize}
Applying Theorem~\ref{result:Lempel-negative-orientable} multiplies the period by $q$, since in
every case $w_q(S')$ is a unit, and the result follows. \qed
\end{proof}
Periods of the orientable sequences obtained using Theorem~\ref{theorem:OS3} are tabulated for
small $q$ in Table~\ref{table_OS_periods_n3}.  The value of the upper bound given in
\cite{Alhakim24a} is given in brackets for comparison purposes.

\begin{table}[htb]
\caption{$\mathcal{OS}_q(3)$ periods (and bounds)} \label{table_OS_periods_n3}
\begin{center}
\begin{tabular}{ccccccc} \hline
$q=3$   & $q=4$   & $q=5$   & $q=6$   & $q=7$     & $q=8$     \\ \hline
  6 (9) & 16 (22) & 45 (50) & 72 (87) & 140 (147) & 208 (220) \\  \hline
\end{tabular}
\end{center}
\end{table}

\subsection{Orientable sequences for general \texorpdfstring{$n$}{n}}

We can apply an almost identical approach to the negative orientable sequences
described in Section~\ref{section:NOS-generaln}. First observe that if a
$\mathcal{NOS}_q(n)$ is constructed as given in
Section~\ref{section:NOS-generaln}, then it will contain the $i$-uniform
$n$-tuples ($i,i,i\ldots,i$) for every $i$ satisfying $1\leq i<q/2$.

\begin{construction}  \label{construction:OSn}
Suppose $q>2$ and $n>2$, and let $S$ be a $\mathcal{NOS}_q(n-1)$ constructed
using the method described in Section~\ref{section:NOS-generaln}. Let $S'$ be
derived from $S$ using Construction~\ref{construction:NOS-unit-weight}
(observing that $S$ satisfies the conditions of the construction as noted
above). Then let $T$ be derived from $S'$ using
Result~\ref{result:Lempel-negative-orientable}.
\end{construction}

\begin{theorem}  \label{theorem:orientable-1}
Suppose $n>2$, $q>2$, and $T$ is derived using
Construction~\ref{construction:OSn}. Then $T$ is an $\mathcal{OS}_q(n)$ of
period at least:
\begin{align*}
q(q^{n-1}-r_{q,n-1,(n-1)q/2}-2)/2~~&~\mbox{if $q\neq6$};\\
q(q^{n-1}-r_{q,n-1,(n-1)q/2}-4)/2~~&~\mbox{if $q=6$;}
\end{align*}
where, as previously, $r_{n-1,q,s}$ is the number of $q$-ary $(n-1)$-tuples
with pseudoweight exactly $s$.
\end{theorem}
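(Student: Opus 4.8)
The plan is to chain together the three stages of Construction~\ref{construction:OSn} and track how the period evolves at each stage, invoking the relevant earlier result at each step, and then to perform the final arithmetic. Since every ingredient is already established, the proof amounts to a careful assembly rather than any new analytic work.

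First I would record the period of the starting sequence $S$. Because $S$ is a $\mathcal{NOS}_q(n-1)$ produced by Construction II, Lemma~\ref{lemma:NOS-generaln}, applied with $n$ replaced by $n-1$, gives its period as $m = (q^{n-1} - r_{q,n-1,(n-1)q/2})/2$. Next I would verify that $S$ meets the hypotheses of Construction~\ref{construction:NOS-unit-weight}, namely that it contains the $i$-uniform $(n-1)$-tuple $(i,i,\ldots,i)$ for every $i$ with $1\le i<q/2$. This is immediate from the design of Construction II: such a tuple has pseudoweight $(n-1)f(i)=(n-1)i$, which is strictly less than $(n-1)q/2$ precisely when $i<q/2$, and every tuple of pseudoweight below that threshold occurs in $S$. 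Lemma~\ref{lemma:NOS-unit-weight} then applies and tells me that the derived sequence $S'$ is a $\mathcal{NOS}_q(n-1)$ whose reduced weight $w_q(S')$ is a unit in $\mathbb{Z}_q$, with period at least $m-1$ if $q\neq6$ and at least $m-2$ if $q=6$.

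The final stage applies Result~\ref{result:Lempel-negative-orientable} to $S'$. Here the key observation is that a unit $u\in\mathbb{Z}_q$ has additive order exactly $q$, since $ku\equiv0\pmod q$ forces $q\mid k$; so in the notation of that result $h=q$ and $q/h=1$, whence $D^{-1}(S')=T$ is a single orientable sequence of order $(n-1)+1=n$ whose period is exactly $q$ times the period of $S'$. Multiplying the period bounds for $S'$ by $q$ and substituting $m=(q^{n-1}-r_{q,n-1,(n-1)q/2})/2$ yields the lower bound $q(m-1)=q(q^{n-1}-r_{q,n-1,(n-1)q/2}-2)/2$ when $q\neq6$ and $q(m-2)=q(q^{n-1}-r_{q,n-1,(n-1)q/2}-4)/2$ when $q=6$, which are exactly the claimed bounds.

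I do not expect a genuinely hard step, as the argument is bookkeeping over black boxes already proved. The two points that demand care are the verifications feeding those black boxes: confirming that the required uniform tuples are guaranteed to be present in $S$, so that Construction~\ref{construction:NOS-unit-weight} is legitimately applicable, and confirming that unit weight forces the additive order $h$ to equal $q$, so that the Lempel preimage is a \emph{single} orientable sequence of period exactly $q$ times that of $S'$ rather than a collection of shorter ones. Both are short, but they are the only places where the bound could silently go wrong.
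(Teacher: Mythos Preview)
Your proposal is correct and follows essentially the same approach as the paper's own proof: invoke Lemma~\ref{lemma:NOS-generaln} for the period of $S$, Lemma~\ref{lemma:NOS-unit-weight} for the period and unit weight of $S'$, and Result~\ref{result:Lempel-negative-orientable} to multiply by $q$. If anything, you are slightly more explicit than the paper in verifying the presence of the uniform tuples and in spelling out why unit weight forces $h=q$.
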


\begin{proof}
By Lemma~\ref{lemma:NOS-generaln} there exists a $\mathcal{NOS}_q(n-1)$ $S$ of period
$(q^{n-1}-r_{n,q-1})/2$.  By Lemma~\ref{lemma:NOS-unit-weight}, constructing $S'$ from $S$ using
Construction~\ref{construction:NOS-unit-weight} will yield a $\mathcal{NOS}_q(n-1)$ of period:
\begin{itemize}
\item $\frac{q^{n-1}-r_{q,n-1,(n-1)q/2}}{2}-1$ if $q\neq6$; and
\item $\frac{q^{n-1}-r_{q,n-1,(n-1)q/2}}{2}-2$ if $q=6$.
\end{itemize}
The result follows from Result~\ref{result:Lempel-negative-orientable}. \qed
\end{proof}

By Note~\ref{note:rqns}, these sequences have period of the same order of magnitude as the
upper bound.
Periods of the orientable sequences obtained using Theorem~\ref{theorem:orientable-1} are tabulated
for small $q$ in Table~\ref{table_OS_periods_general_n}.  As previously, the value of the upper
bound given in \cite{Alhakim24a} is given in brackets for comparison purposes.

\begin{table}[htb]
\caption{$\mathcal{OS}_q(n)$ periods (and bounds)} \label{table_OS_periods_general_n}
\begin{center}
\begin{tabular}{c|rrrrrr} \hline
$n$ & $q=3$   & $q=4$    & $q=5$    & $q=6$    & $q=7$     & $q=8$     \\ \hline
3   & 6       & 16       & 45       & 72       & 140       & 208       \\
    & (9)     & (22)     & (50)     & (87)     & (147)     & (220)     \\ \hline
4   & 27      & 84       & 275      & 522      & 1127      & 1792      \\
    & (33)    & (118)    & (290)    & (627)    & (1155)    & (2012)    \\ \hline
5   & 90      & 368      & 1385     & 3288     & 7756      & 14672     \\
    & (105)   & (478)    & (1490)   & (3777)   & (8211)    & (16124)   \\ \hline
6   & 285     & 1540     & 7155     & 20160    & 56049     & 118856    \\
    & (336)   & (2014)   & (7680)   & (23217)  & (58464)   & (130812)  \\ \hline
7   & 879     & 6340     & 35805    & 122634   & 388619    & 959152    \\
    & (1032)  & (8062)   & (38640)  & (139317) & (410256)  & (1046524) \\ \hline
8   & 2688    & 25900    & 181095   & 743358   & 2757930   & 7724544   \\
    & (3189)  & (32638)  & (194630) & (839157) & (2879835) & (8386556) \\ \hline
\end{tabular}
\end{center}
\end{table}
\subsection{Examples}  \label{subsection_OS_from_NOS_examples_1}

We next show how the three examples of negative orientable sequences given in
Section~\ref{section:NOS-examples} can be used in the above construction method
to yield orientable sequences.

As noted above, the $\mathcal{NOS}_3(2)$ $S_{3,2}=[011]$ has $w_3(S_{3,2})=2$, a unit in
$\mathbb{Z}_3$ and hence can be used directly in Result~\ref{result:Lempel-negative-orientable}.
Applying $D^{-1}$ to $S_{3,2}$ results in the $\mathcal{OS}_3(3)$ with ring sequence $[001220112]$,
of period 9.  This is optimal (see \cite[Table 1]{Alhakim24a}).

The $\mathcal{NOS}_3(3)$ $S_{3,3}=[0100112111]$ has $w_3(S_{3,3})=2$, which is
again a unit in $\mathbb{Z}_3$ and hence it too can be used directly in
Result~\ref{result:Lempel-negative-orientable}. Applying $D^{-1}$ to $S_{3,3}$
results in the $\mathcal{OS}_3(4)$ with ring sequence
\[ [0011120201~2200012120~1122201012], \]
of period 27.  This is six less than the bound of \cite[Theorem 4.11]{Alhakim24a}.

Finally, the $\mathcal{NOS}_4(3)$ $S_{4,3}$ has $w_4(S_{4,3})=0$, which is not a unit in
$\mathbb{Z}_4$.  To use it in Result~\ref{result:Lempel-negative-orientable} we need to remove a
single `1' from the 1-uniform 3-tuple (111), resulting in the $\mathcal{NOS}_4(3)$
\[ S'_{4,3}=[0122~1201~0210~0113~1121~1]\]
with period 21 and $w_4(S'_{4,3})=3$, a unit in $\mathbb{Z}_4$. Applying $D^{-1}$ to $S'_{4,3}$
results in the $\mathcal{OS}_4(4)$ with ring sequence:
\begin{align*} [0013~1200~1130~0012~1231~2~~3302~0133~0023~3301~0120~1\\
2231~3022~3312~2230~3013~0~~1120~2311~2201~1123~2302~3 ],
\end{align*}
of period 84. This is 34 less then the bound of \cite[Theorem 4.11]{Alhakim24a}.

\section{More orientable sequences}  \label{section:goodOS}

In this final main section we describe how to construct orientable sequences using a second
approach described by Alhakim et al.\ \cite{Alhakim24a}. These have period of the same order of magnitude as the
upper bound.

\subsection{Preliminaries}

We first need the following definitions.

\begin{definition}[Definition 6.16 of \cite{Alhakim24a}]  \label{definition:runs}
Let $S=(s_i)$ be a periodic $q$-ary sequence. For $a \in \mathbb{Z}_q$ we write $a^t$ for a string
of $t$ consecutive terms $a$ of $S$. A \emph{run} of $a$ in $S$ is a string
$s_j,\dots,s_{j+t-1}=a^t$ with $s_{j-1},s_{j+t} \not=a$. A run, $a^t$, is \emph{maximal} in $S$ if
any string $a^{t^{\prime}}$ of $S$ has $t^{\prime} \le t$  and further is \emph{inverse maximal}
if, in addition, any string $1^{t^{\prime}} + (q-1-a)^{t^{\prime}}$ of $S$ has $t^{\prime} \le t$.
\end{definition}

We note that if $a^t$ is a maximal run of $S$ then $(a + b)^t$ is a maximal run of the translate by
$b$ of $S$ and there exists a shift of such a translate that has ring sequence whose first $t$
terms are $a + b$. Moreover if $S$ is orientable (respectively negative orientable) then so is this
shifted translate.

\begin{definition}[Definition 6.17 of \cite{Alhakim24a}]  \label{definition:Ea}
Let $ a \in \mathbb{Z}_q$. Suppose that the ring sequence of a periodic sequence $S$ is
$[s_0,s_1,\cdots,s_{m-1}]$ and $r$ is the smallest non-negative integer such that
$a^t=s_r,s_{r+1},\cdots,s_{r+t-1}$ is a maximal run for $a \in \mathbb{Z}_q$. Define the sequence
$\mathcal{E}_a(S)$ to be the sequence with ring sequence
 \[ [s_0,s_1,\cdots,s_{r-1},a,s_r,s_{r+1},\cdots,s_{m-1}] \]
i.e.\ where the occurrence of $a^t$ is replaced with $a^{t+1}$.
\end{definition}

We also need:

\begin{definition}[Definition 6.20 of \cite{Alhakim24a}]  \label{definition:good}
An orientable (respectively negative orientable) sequence with the property that any run of $0$ has
length at most $n-2$ is said to be \emph{good}.
\end{definition}

We can now give the following result.

\begin{result}  \label{result:Cor-length1}[Corollary 6.22 of \cite{Alhakim24a}]  \label{result:recursive}
Suppose $S_n$ is a good orientable (respectively negative orientable) sequence of order $n$ and
period $m_n$ with the property that $w_q(S)$ is a unit in $\mathbb{Z}_q$ ($n\geq 2$ and $q>2$).
Recursively define the sequences $S_{i+1}=\mathcal{E}_a(D^{-1}(S_i))$, where $a =
1-w_q(D^{-1}(S_i))$, for $i\geq n$, and suppose $S_i$ has period $m_i$ ($i>n$). Then, $S_{i}$ is a
good orientable or negative orientable sequence for every $i\ge n$, and $m_{n+j+1}=qm_{n+j}+1$ for
every $j\geq 0$. $S_{i}$ is an orientable (respectively negative orientable) sequence when $i-n$ is
even and a negative orientable (respectively orientable) sequence when $i-n$ is odd.
\end{result}

\begin{note}
Corollary 6.22 of \cite{Alhakim24a} differs slightly from the above statement in that it makes the
slightly more restrictive assumption that $w_q(S)=1$. However, it is straightforward to verify that
the result holds whenever $w_q(S)$ is a unit in $\mathbb{Z}_q$.
\end{note}

The following simple result follows.

\begin{corollary}
In the notation of Result~\ref{result:Cor-length1}
\[ m_{n+s} = q^{s}m_n + \frac{(q^{s}-1)}{(q-1)} \]
for every $s\geq 1$.
\end{corollary}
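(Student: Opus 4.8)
The plan is to prove the closed form
\[ m_{n+s} = q^{s}m_n + \frac{q^{s}-1}{q-1} \]
by induction on $s$, using the one-step recurrence $m_{n+j+1}=qm_{n+j}+1$ supplied by Result~\ref{result:Cor-length1}. This is a standard linear first-order recurrence, so the whole argument is essentially a routine unwinding; the only thing to be careful about is index bookkeeping and the base case.

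First I would verify the base case $s=1$. Here the claimed formula reads $m_{n+1}=qm_n + (q-1)/(q-1) = qm_n+1$, which is precisely the recurrence $m_{n+j+1}=qm_{n+j}+1$ instantiated at $j=0$. So the base case is immediate from Result~\ref{result:Cor-length1}.

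Next I would carry out the inductive step. Assume the formula holds for some $s\geq 1$, i.e.\ $m_{n+s}=q^{s}m_n + (q^{s}-1)/(q-1)$. Applying the recurrence with $j=s$ gives $m_{n+s+1}=qm_{n+s}+1$. Substituting the inductive hypothesis, I would compute
\[ m_{n+s+1} = q\left(q^{s}m_n + \frac{q^{s}-1}{q-1}\right) + 1 = q^{s+1}m_n + \frac{q^{s+1}-q}{q-1} + 1. \]
Combining the last two terms over the common denominator $q-1$ yields $\frac{q^{s+1}-q + (q-1)}{q-1} = \frac{q^{s+1}-1}{q-1}$, so that $m_{n+s+1}=q^{s+1}m_n + (q^{s+1}-1)/(q-1)$, which is exactly the formula with $s$ replaced by $s+1$. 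This completes the induction.

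There is essentially no hard part here: the entire content is an elementary telescoping/induction, and the geometric-series term $(q^{s}-1)/(q-1)$ is forced by the constant $+1$ in the recurrence. The only mild subtlety worth flagging is making sure the recurrence $m_{n+j+1}=qm_{n+j}+1$ is invoked only for $j\geq 0$ (as guaranteed by Result~\ref{result:Cor-length1}), which it is throughout, and that the denominator $q-1$ is nonzero, which holds since $q>2$. An equally valid alternative would be to sum the geometric series directly: iterating the recurrence $s$ times gives $m_{n+s}=q^{s}m_n + \sum_{k=0}^{s-1}q^{k}$, and evaluating the finite geometric sum gives the stated closed form; I would likely present the induction as it is the cleanest to typeset.
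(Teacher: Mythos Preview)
Your proposal is correct and takes essentially the same approach as the paper: a straightforward induction on $s$ using the one-step recurrence $m_{n+j+1}=qm_{n+j}+1$ from Result~\ref{result:Cor-length1}, with the same base case and the same algebraic simplification in the inductive step. Your additional remarks (the geometric-series alternative and the observation that $q-1\neq 0$) are fine supplements but not present in the paper's version.
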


\begin{proof}
We establish the result by induction on $s$. Suppose $s=1$; then from
Result~\ref{result:Cor-length1} we immediately know that
\[ m_{n+1} = qm_n+1 \]
i.e.\ the desired result holds in this case since, for $s=1$:
\[ q^{s}m_n + \frac{(q^{s}-1)}{(q-1)} = qm_n + \frac{(q-1)}{(q-1)} = qm_n+1. \]

Now suppose the results holds for $s=t>0$. That is, suppose
\[ m_{n+t} = q^{t}m_n + \frac{(q^{t}-1)}{(q-1)}. \]
By Result~\ref{result:Cor-length1},
\[ m_{n+t+1} = q(q^{t}m_n + \frac{(q^{t}-1)}{(q-1)})+1 = q^{t+1}m_n + \frac{(q^{t+1}-q)}{(q-1)}) + 1\]
and the result clearly holds for $t+1$. \qed
\end{proof}

This immediately gives.

\begin{corollary}  \label{corollary:period}
Suppose $S_n$ is a good $\mathcal{NOS}_q(n)$ of period $m$ with the property that $w_q(S)$ is a
unit in $\mathbb{Z}_q$.  Then $S_{n+2s+1}$ is a good $\mathcal{OS}_q(n+2s+1)$ of period $\ell_{s}$
for every $s>0$, where
\[ \ell_{s} = q^{2s+1}m + \frac{(q^{2s+1}-1)}{(q-1)}                 .\]
\end{corollary}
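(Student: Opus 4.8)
The plan is to deduce both assertions---that $S_{n+2s+1}$ is a good orientable sequence and that it has period $\ell_s$---directly from Result~\ref{result:Cor-length1} and the closed-form period corollary that immediately precedes this statement, so that essentially no fresh calculation is needed.

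First I would set $i = n + 2s + 1$ and record that $i - n = 2s + 1$ is odd for every $s > 0$. The hypotheses on $S_n$ (a good $\mathcal{NOS}_q(n)$ of period $m$ with $w_q(S)$ a unit in $\mathbb{Z}_q$) are precisely those required to start the recursion of Result~\ref{result:Cor-length1}, taking the ``respectively'' (negative orientable seed) branch of that result. That branch guarantees that every term $S_i$ of the chain is good, and that $S_i$ is negative orientable when $i - n$ is even but orientable when $i - n$ is odd. Since $i - n = 2s+1$ is odd, I conclude that $S_{n+2s+1}$ is a good $\mathcal{OS}_q(n+2s+1)$, which disposes of the structural part of the claim.

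For the period I would invoke the preceding corollary, which gives $m_{n+t} = q^{t}m_n + (q^{t}-1)/(q-1)$ for every $t \geq 1$. Specialising to $t = 2s+1$ (legitimate because $2s + 1 \geq 3 > 1$ whenever $s > 0$) and substituting $m_n = m$ produces
\[ m_{n+2s+1} = q^{2s+1}m + \frac{q^{2s+1}-1}{q-1} = \ell_s, \]
exactly the stated formula.

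The only step needing genuine attention---and the place where a slip is most likely---is the parity bookkeeping in the ``respectively'' clause of Result~\ref{result:Cor-length1}: I must confirm that beginning from a \emph{negative} orientable seed and advancing an \emph{odd} number $2s+1$ of steps lands on an orientable (not negative orientable) sequence. Once that is checked, the remainder is a pure substitution into the period recursion.
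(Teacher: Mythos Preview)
Your proposal is correct and matches the paper's approach exactly: the paper provides no explicit proof for this corollary, merely writing ``This immediately gives,'' so the intended argument is precisely the substitution you describe---apply the preceding closed-form period corollary with $t=2s+1$ and read off the orientable/negative-orientable parity from Result~\ref{result:Cor-length1}. Your parity check is right: starting from a negative orientable seed and advancing an odd number of steps lands on an orientable sequence.
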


\subsection{Recursively constructing orientable sequences}

Combining Result~\ref{result:recursive} and Corollary~\ref{corollary:period} with Construction III,
as given in Section~\ref{section:NOS-construction-3}, yields the following result.

\begin{theorem} \label{theorem:orientable-2}
If $q>2$, there exists a:
\begin{itemize}
\item good $\mathcal{OS}_q(2s+3)$ of period at least
\[ q^{2s+1}m_2 + \frac{(q^{2s+1}-1)}{(q-1)}  \]
\item good $\mathcal{OS}_q(2s+4)$ of period at least
\[ q^{2s+1}m_3 + \frac{(q^{2s+1}-1)}{(q-1)} \]
\end{itemize}
for every $s\geq 0$, where

\[
m_2 =
\begin{cases}
 (q-1)(q-2)/2-1, & \text{if~} q\neq6 \\
 (q-1)(q-2)/2-2, & \text{if~} q=6
\end{cases}
\]

and

\[
m_3 =
\begin{cases}
(q-1)^3/2-1,                  & \text{if $q$ is odd} \\
((q-1)^3- (3q^2-6q+4)/4)/2-1, & \text{if $q$ is even ($q\neq6$)} \\
((q-1)^3- (3q^2-6q+4)/4)/2-2, & \text{if $q=6$}.
\end{cases}
\]

\end{theorem}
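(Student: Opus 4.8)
The plan is to combine the machinery assembled in the paper. Theorem~\ref{theorem:orientable-2} is essentially a bookkeeping exercise: we must identify two explicit starter sequences $S_2$ and $S_3$ satisfying the hypotheses of Corollary~\ref{corollary:period}, namely that each $S_i$ is a \emph{good} $\mathcal{NOS}_q(i)$ whose weight $w_q(S_i)$ is a unit in $\mathbb{Z}_q$, and then feed their periods $m_2,m_3$ directly into the closed-form period formula of that corollary. The two bullet points in the statement are just the two cases $i=2$ and $i=3$ of Corollary~\ref{corollary:period}, read with $n=2$ (giving order $2s+3$) and $n=3$ (giving order $2s+4$).

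First I would construct the starter $\mathcal{NOS}_q(2)$ and $\mathcal{NOS}_q(3)$ via Construction III (Section~\ref{section:NOS-construction-3}), since that construction produces \emph{zero-free} sequences, and a zero-free sequence trivially has no run of $0$ at all, so it is automatically good (Definition~\ref{definition:good}). For the periods, Lemma~\ref{lemma:NOS-generaln-zerofree} gives the raw period of the zero-free construction: for $n=2$ this is $((q-1)^2 - k_{q,2,q})/2$, and computing $k_{q,2,q}=q-1$ from Lemma~\ref{lemma:knqw}(iv) simplifies this to $(q-1)(q-2)/2$; for $n=3$ it is $(q-1)^3/2$ when $q$ is odd and $((q-1)^3 - k_{q,3,3q/2})/2$ when $q$ is even, with $k_{q,3,3q/2}=(3q^2-6q+4)/4$ supplied by Lemma~\ref{lemma:knqw}(v). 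These match the numbers in the theorem \emph{before} the $-1$ or $-2$ adjustments, and those adjustments come from the next step.

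Next I would arrange for the weight to be a unit. The zero-free construction need not have $w_q$ a unit, so I would apply Construction~\ref{construction:NOS-unit-weight}, whose validity here rests on the observation (already made for Construction II, and equally true here since the zero-free graph contains every zero-free $n$-tuple of small weight) that these sequences contain all the $i$-uniform $n$-tuples for $1\le i<q/2$. By Lemma~\ref{lemma:NOS-unit-weight} this forces $w_q$ to be a unit while shortening the period by at most $1$ (for $q\neq6$) or at most $2$ (for $q=6$), which accounts precisely for the $-1$, $-2$ adjustments in the displayed values of $m_2$ and $m_3$. I must also check that deleting a single symbol from a uniform run does not destroy goodness: removing one copy of a nonzero symbol $i$ leaves the sequence zero-free, hence still good. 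Finally, with $S_2,S_3$ now good $\mathcal{NOS}_q$ of the stated periods and unit weight, I would invoke Corollary~\ref{corollary:period} with $n=2$ and $n=3$ respectively to obtain good $\mathcal{OS}_q$ of orders $2s+3$ and $2s+4$ and periods $q^{2s+1}m + (q^{2s+1}-1)/(q-1)$.

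The main obstacle is not any single hard step but rather ensuring the hypotheses of Corollary~\ref{corollary:period} are genuinely met after the weight adjustment --- in particular that goodness survives Construction~\ref{construction:NOS-unit-weight}, and that the $q=6$ case (where the adjustment costs $2$ rather than $1$) is tracked correctly through both the $m_2$ and $m_3$ formulas. I would also want to double-check the small edge cases (very small $q$, and $q$ even versus odd in the $n=3$ count) so that the claimed simplifications of Lemma~\ref{lemma:NOS-generaln-zerofree} into the clean expressions $(q-1)(q-2)/2$ and $(q-1)^3/2$ hold uniformly; these are routine but are where an off-by-one error would most easily creep in.
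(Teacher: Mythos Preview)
Your proposal is correct and follows essentially the same route as the paper: build zero-free (hence good) $\mathcal{NOS}_q(2)$ and $\mathcal{NOS}_q(3)$ via Construction~III, compute their periods from Lemma~\ref{lemma:NOS-generaln-zerofree} together with the explicit values $k_{q,2,q}=q-1$ and $k_{q,3,3q/2}=(3q^2-6q+4)/4$, adjust to unit weight via Construction~\ref{construction:NOS-unit-weight} at a cost of $1$ (or $2$ when $q=6$), and then invoke Corollary~\ref{corollary:period}. You are in fact slightly more explicit than the paper in noting that goodness survives the weight adjustment because deleting a nonzero symbol preserves zero-freeness.
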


\begin{proof}
First observe that if $S$ is a negative orientable sequence obtained using the method underlying
Lemma~\ref{lemma:NOS-generaln-zerofree} then it is good by definition (since it contains no zeros).
It also clearly contains all the $i$-uniform $n$-tuples $(i,i,\ldots,i)$ for every $i$ satisfying
$1\leq i<q/2$. If it has period $m$ then, using Construction~\ref{construction:NOS-unit-weight}, we
can construct a good $\mathcal{NOS}_q(n)$ $S'$ where $w(S')$ is a unit in $\mathbb{Z}_q$, with
period at least $m-1$ (or $m-2$ if $q=6$).
\begin{itemize}
\item From Lemma~\ref{lemma:NOS-generaln-zerofree} with $n=2$ there exists a good
    $\mathcal{NOS}_q(2)$ of period $((q-1)^2- (q-1))/2=(q-1)(q-2)/2$ (observing that
    $k_{q,2,q}=q-1$, from Lemma~\ref{lemma:knqw}(iv)). Hence from the above discussion there
    exists a good $\mathcal{NOS}_q(2)$ whose weight is a unit in $\mathbb{Z}_q$ with period at
    least $(q-1)(q-2)/2-1$ if $q\neq 6$ and period at least $(q-1)(q-2)/2-2$ if $q=6$.

\item From Lemma~\ref{lemma:NOS-generaln-zerofree} with $n=3$ there exists a good
    $\mathcal{NOS}_q(3)$ of period $(q-1)^3/2$ if $q$ is odd and period $((q-1)^3-
    k_{q,3,3q/2})/2$ if $q$ is odd. Hence from the above discussion there exists a good
    $\mathcal{NOS}_q(3)$ whose weight is a unit in $\mathbb{Z}_q$ with period at least
    $(q-1)^3/2-1$ if $q$ is odd, $((q-1)^3- k_{q,3,3q/2})/2-1$ if $q$ is even ($q\neq 6$) and
    period at least $((q-1)^3- k_{q,3,3q/2})/2-2$ if $q=6$.  The result follows by noting that
    $k_{q,3,3q/2})/2=(3q^2-6q+4)/4$, from Lemma~\ref{lemma:knqw}(v). \qed

\end{itemize}
\end{proof}

Thus if, for example, $q$ is odd, there exists a good $\mathcal{OS}_q(2s+4)$ of period at least
\[  q^{2s+1}(q-1)^3/2 - 1 + \frac{(q^{2s+1}-1)}{(q-1)}. \]
For each $s$ these have period of the same order  of magnitude as the
upper bound.

\subsection{Examples}  \label{subsection_OS_from_NOS_examples_2}

We conclude by giving two simple examples of the approach described.

\begin{itemize}
\item {\bf Generating a good $\mathcal{OS}_3(2i)$ for every $i\geq2$}

From Section~\ref{section:examples-NOS-zerofree} we know that $S_3=[1112]$ is a good
$\mathcal{NOS}_3(3)$ of period 4 where $w_3(S_3)=2$, a unit in $\mathbb{Z}_3$. Applying
Result~\ref{result:recursive},
\[ D^{-1}(S_3) = [012020121201] \]
and
\[ S_4 = \mathcal{E}_1(D^{-1}(S_3)) = [0120201212011] \]
is a good $\mathcal{OS}_3(4)$ of period 13 and $\mathbb{Z}_3$-weight 1. This recursive process
can now be repeated to obtain a good $\mathcal{OS}_3(2i)$ with unit $\mathbb{Z}_3$-weight for
every $i>1$.

\item {\bf Generating a good $\mathcal{OS}_4(2i+1)$ for every $i\geq 1$}

From Section~\ref{section:examples-NOS-zerofree} we know that $[112]$ is a good
$\mathcal{NOS}_4(2)$ of period 3 with $\mathbb{Z}_4$-weight 0. We can delete a single 1 to
obtain $T_2=[12]$, a good $\mathcal{NOS}_4(2)$ of period 2 with $\mathbb{Z}_4$-weight 3, a unit
in $\mathbb{Z}_4$. Applying Result~\ref{result:recursive},
\[ D^{-1}(T_2) = [01302312] \]
and
\[ T_3 = \mathcal{E}_1(D^{-1}(T_2)) = [011302312] \]
is a good $\mathcal{OS}_4(3)$ of period 9 and $\mathbb{Z}_4$-weight 1. This recursive process
can now be repeated to obtain a good $\mathcal{OS}_4(2i+1)$ with unit $\mathbb{Z}_4$-weight for
every $i>0$.

\end{itemize}

\section{Concluding remarks}  \label{section:Conclusions}

In this paper we used two of the approaches proposed in \cite{Alhakim24a} to generate orientable
sequences with period of the same order of magnitude as the
upper bound. Whilst the second
approach yields sequences with somewhat shorter periods, in practical applications it may be that
its recursive (and explicit) approach is more convenient.

There remain a variety of directions for future research. The constructions in this paper require
an exponential amount of space with respect to $n$. They either require storage of a graph of
exponential size to use an Eulerian circuit algorithm, or storage of the sequence of a smaller
order to use Lempel's homomorphism. It would be desirable to be able to generate negative
orientable sequences more space-efficiently. Another potential future research direction would be
to generate longer negative orientable sequences by incorporating some words with pseudoweight
equal to $nq/2$.  Finally, it remains to consider how to devise potential input sequences for the
other methods of construction proposed in \cite{Alhakim24a}.

\providecommand{\bysame}{\leavevmode\hbox to3em{\hrulefill}\thinspace}
\providecommand{\MR}{\relax\ifhmode\unskip\space\fi MR }
\providecommand{\MRhref}[2]{%
  \href{http://www.ams.org/mathscinet-getitem?mr=#1}{#2}
} \providecommand{\href}[2]{#2}

\end{document}